\numberwithin{equation}{section}
\newtheorem{theorem}{Theorem}[section]
\newtheorem{lemma}[theorem]{Lemma}
\newtheorem{corollary}[theorem]{Corollary}
\newtheorem{proposition}[theorem]{Proposition}
\newtheorem{question}{Question}
\theoremstyle{definition}
\newtheorem{definition}[theorem]{Definition}
\newtheorem{example}[theorem]{Example}
\theoremstyle{remark}
\newtheorem{remark}[theorem]{Remark}
\newcommand{\db}{\bar{\partial}}
\newcommand{\dd}{\partial}
\newcommand{\n}{\nabla}
\newcommand{\rr}[1]{\mathrm{#1}}
\newcommand{\dif}{\rr{d}}
\newcommand{\Li}{\mathcal{L}}
\newcommand{\leqs}{\leqslant}
\newcommand{\geqs}{\geqslant}
\newcommand{\pdt}{\frac{\partial}{\partial t}}
\newcommand{\ddt}{\frac{\dif}{\dif t}}
\newcommand{\W}{\mathcal{W}}
\newcommand{\rean}{\mathbb{R}}
\newcommand{\kah}{\text{K\"{a}hler}}
\newcommand{\brs}[1]{\left| #1 \right|}
\DeclareMathOperator{\tr}{tr}
\DeclareMathOperator{\Ric}{Ric}
\DeclareMathOperator{\diver}{div}
\DeclareMathOperator{\id}{id}
\DeclareMathOperator{\image}{im}
\title{On the shrinking solitons of generalized ricci flow}
\author{Xilun Li, Yanan Ye}
\address{}
\email{\href{mailto:lxl28@stu.pku.edu.cn}{lxl28@stu.pku.edu.cn}}
\email{\href{mailto:yeyanan@outlook.com}{yeyanan@outlook.com}}
\date{}
\begin{document}
\maketitle
\begin{abstract}
	We show that every gradient shrinking soliton of the generalized Ricci flow on compact manifold is a Ricci soliton.
 And we prove that the  pluriclosed soliton is gradient \kah-Ricci soliton under a broad cohomological condition.
 Moreover, we construct the first example of non-trivial shrinking generalized soliton, which can serve as a singularity model of the generalized Ricci flow.
\end{abstract}

\tableofcontents

\section{Introduction}
% The pluriclosed flow, introduced by Streets and Tian\cite{MR2673720}, plays a significant role in the study of non-K{\"a}hler complex manifolds (see e.g.\cite{MR4181011}). In the viewpoint of real manifold, the pluriclosed flow is exactly the generalized Ricci flow up to diffeomorphism, which is first studied by Streets\cite{MR2426247} in mathematics. The generalized Ricci flow is constituted by coupling the Ricci flow with a $B$-field and first appeared in physics literature from the study of renormalization group flow \cite{MR0819433}.
% Formally speaking, a one-parameter family of Riemannian metrics $g(t)$ and a one-parameter family of closed 3-forms $H(t)$ is a solution to the generalized Ricci flow on a manifold $M^n$ if
% \begin{align*}
%     \partial_{t}g=-2\Ric+\frac12 H^2
%     ,\quad
%     \partial_{t} H=-\Delta_{d}H.
% \end{align*}
% Here $H^2(X,Y)=g(i_XH,i_YH)$ is a non-negative definite tensor and $\Delta_{d}=dd^*+d^*d$ is the Hodge Laplacian induced by time-dependent metric $g(t)$.
% It is easy to see that when $H$ equals 0, it reduces to the classical Ricci flow.

The generalized Ricci flow studied systematically by Streets and Tian\cite{MR2426247,MR3110582} is one of the most natural generalizations of the Ricci flow. 
To our knowledge, it appeared first in the mathematical literature in \cite{MR2426247}, originating from the research on the renormalization group flow in physics\cite{MR0819433,MR2214659}.
Formally speaking, a one-parameter family of Riemannian metrics $g(t)$ and a one-parameter family of closed 3-forms $H(t)$ is a solution to the generalized Ricci flow on a manifold $M^n$ if
\begin{align*}
    \partial_{t}g=-2\Ric+\frac12 H^2
    ,\quad
    \partial_{t} H=-\Delta_{d}H.
\end{align*}
Here $H^2(X,Y)=g(i_XH,i_YH)$ is a non-negative definite tensor and $\Delta_{d}=dd^*+d^*d$ is the Hodge Laplacian induced by time-dependent metric $g(t)$.
It is easy to see that when $H$ equals 0, it reduces to the classical Ricci flow.
In complex geometry, it is common to choose $H=d^c\omega$, where $\omega$ is the K{\"a}hler form associated with $g$. 
And in this case, it becomes the pluriclosed flow introduced by Streets and Tian\cite{MR2673720}, which plays a significant role in the study of non-K{\"a}hler complex manifolds (see e.g. \cite{MR4181011}).

Similar to the Ricci flow, self-similar solutions play a crucial role in the study of geometric flows. For the generalized Ricci flow, a generalized soliton is defined as the solution to
\begin{align*}
    2\Ric-\frac12 H^2=\lambda g-\mathcal{L}_{X}g
    ,\quad
    \Delta_{d}H=\lambda H-\mathcal{L}_{X}H
\end{align*}
for some real number $\lambda$ and vector field $X$.
It is referred to as expanding, steady, and shrinking generalized solitons when
$\lambda<0,=0,>0$, respectively.
And it is called gradient generalized soliton, if $X=\nabla f$ for some smooth function. 
Similarly, we can define the pluriclosed solitons in a completely analogous manner.
To elucidate the relationship between the generalized soliton and pluriclosed soliton, we prove that if a pluriclosed metric is conformally equivalent to a balanced metric, then it is K{\"a}hler (see Lemma \ref{balancepluriclosed}). This generalizes some results\cite{MR4598934} on the Fino-Vezzoni conjecture\cite{MR3327047}.
We refer the readers to Section \ref{sec-pluriclosedsoliton} for details.

Furthermore, using the classification of compact complex surfaces, Streets\cite{MR4023384} proves that non-trivial (i.e., $H\neq0$) compact steady pluriclosed solitons only exist on the Hopf surface, and he provides a one parameter family of examples.
Later, Streets and Ustinovskiy\cite{cpam} gave a simpler construction, and moreover showed that these are generalized \kah. Finally \cite{MR4619595} proved that these examples are the all compact steady generalized K{\"a}hler solitons in dimension two.

For expanding solitons, similar discussions to those of the Ricci flow\cite{MR3110582,MR4284898} can be adopted to prove that all expanding solitons on compact manifolds are Ricci solitons.
Streets\cite{MR4023384} proved that shrinking pluriclosed solitons on compact K{\"a}hler manifolds are K{\"a}hler Ricci solitons. Unfortunately, due to the lack of monotonicity of the generalized version of the $\W$ functional\cite{MR4284898}, we know very little about the shrinking generalized soliton in general.
In this paper, we will prove the triviality of shrinking generalized soliton under certain assumptions.

\begin{theorem}[c.f. Theorem \ref{noshrinkingsoliton}]
Every compact gradient shrinking generalized soliton is a Ricci soliton.
\end{theorem}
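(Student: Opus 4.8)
The plan is to reduce the statement to showing that the $3$-form $H$ vanishes identically: once $H\equiv 0$, the first soliton equation $2\Ric-\frac12 H^2=\lambda g-\mathcal{L}_{\nabla f}g$ becomes exactly $2\Ric=\lambda g-\mathcal{L}_{\nabla f}g$, the gradient shrinking Ricci soliton equation. First I would normalize the first equation to $\nabla^2 f=\frac\lambda2 g-\Ric+\frac14 H^2$, and rewrite the second one using $dH=0$: since Cartan's formula gives $\mathcal{L}_{\nabla f}H=\nabla_{\nabla f}H+\nabla^2 f\ast H$ (the natural action of the Hessian on a $3$-form) and Bochner--Weitzenb\"ock gives $\Delta_d H=\nabla^*\nabla H+q(\Rm)H$, the equation $\Delta_d H=\lambda H-\mathcal{L}_{\nabla f}H$ is equivalent to the single drift eigenvalue equation $\Delta_f H:=\nabla_f^*\nabla H+q(\Rm)H+\nabla^2 f\ast H=\lambda H$, where $\Delta_f$ is the Witten (weighted Hodge) Laplacian of the measure $e^{-f}\,dV$. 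Because $dH=0$, pairing against $H$ in $L^2(e^{-f}dV)$ yields a first global identity $\|d_f^*H\|_f^2=\lambda\|H\|_f^2$, with $d_f^*=d^*+i_{\nabla f}$.

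Next I would feed the first soliton equation into the weighted Bochner formula. Expanding $\langle\Delta_f H,H\rangle_f=\lambda\|H\|_f^2$ by Weitzenb\"ock gives
\[ \lambda\|H\|_f^2=\|\nabla H\|_f^2+\int_M\langle q(\Rm)H,H\rangle e^{-f}+\int_M\langle\nabla^2 f\ast H,H\rangle e^{-f}. \]
Substituting $\nabla^2 f=\frac\lambda2 g-\Ric+\frac14 H^2$, the term $\langle g\ast H,H\rangle=3|H|^2$ produces a multiple of $\lambda\|H\|_f^2$, the $\frac14 H^2$ term produces the manifestly non-negative density $\frac34|H^2|^2$ (here $|H^2|^2=\tr((H^2)^2)\ge 0$ since $H^2\succeq 0$), and crucially the Ricci contraction from $\nabla^2 f$ exactly cancels the Ricci part of $q(\Rm)$. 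Only the genuine Riemann contraction $\Rm(H)\sim R_{ikjl}H^{ij}{}_m H^{klm}$ survives, and rearranging gives the key identity
\[ \int_M\langle\Rm(H),H\rangle\,e^{-f}=\|\nabla H\|_f^2+\tfrac\lambda2\|H\|_f^2+\tfrac34\int_M|H^2|^2\,e^{-f}. \]

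The main obstacle is that this right-hand side is strictly positive whenever $H\not\equiv 0$ (because $\lambda>0$), so I must produce an independent upper bound for the \emph{undifferentiated} curvature term on the left; the soliton equations constrain only the Ricci tensor, not the full curvature, so this term is not sign-definite a priori and a single Bochner computation cannot close. My plan to break the impasse is to evaluate $\int_M\langle\Rm(H),H\rangle e^{-f}$ a second way, using structure not yet exploited: $dH=0$ together with the companion $2$-form $\beta=d_f^*H$, which satisfies $d\beta=\lambda H$ and $d_f^*\beta=0$ (so $\beta$ is itself a $\Delta_f$-eigenform with eigenvalue $\lambda$). Commuting covariant derivatives via the second Bianchi identity and integrating by parts on the compact manifold, with no boundary terms, should re-express the same curvature contraction through $\|\nabla H\|_f^2$, $\frac34\int|H^2|^2 e^{-f}$, and norms of $d_f^*H$ pinned down by $\|d_f^*H\|_f^2=\lambda\|H\|_f^2$, but \emph{without} the extra $\frac\lambda2\|H\|_f^2$. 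Comparing the two evaluations should then force $\frac\lambda2\|H\|_f^2\le 0$, and since $\lambda>0$ this yields $H\equiv 0$.

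With $H\equiv 0$ the equation $\nabla^2 f=\frac\lambda2 g-\Ric+\frac14 H^2$ reduces to $2\Ric=\lambda g-\mathcal{L}_{\nabla f}g$, so the soliton is a gradient shrinking Ricci soliton, completing the proof. I expect essentially all of the difficulty to be concentrated in the second evaluation of the curvature term: keeping track of the many contractions generated by the Bianchi/commutator bookkeeping and verifying that, after invoking $dH=0$ and the eigenform relations for $H$ and $\beta$, everything recombines so that the $\lambda$-term drops out with the correct sign. Compactness enters only to discard boundary terms in these integrations by parts, which is precisely why the conclusion is expected to fail in the complete non-compact setting and leaves room for the non-trivial shrinking example constructed later in the paper.
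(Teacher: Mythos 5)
Your first global identity, $\int_M |d^*H+i_{\nabla f}H|^2 e^{-f}dV=\lambda\int_M|H|^2e^{-f}dV$, is exactly the paper's Proposition \ref{lottadjoint} combined with the second soliton equation (the paper derives it from $\Delta_d H+\Li_{\nabla f}H=d\bigl(e^{f}d^*(e^{-f}H)\bigr)$ and weighted adjointness), and your weighted Weitzenb\"ock identity
\begin{equation*}
\int_M\left<\Rm(H),H\right>e^{-f}dV=\int_M|\nabla H|^2e^{-f}dV+\frac{\lambda}{2}\int_M|H|^2e^{-f}dV+\frac34\int_M|H^2|^2e^{-f}dV
\end{equation*}
is a correct consequence of it (the Ricci cancellation you describe does occur). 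But the argument stops being a proof exactly where all of the difficulty lives: the ``second evaluation'' of $\int_M\left<\Rm(H),H\right>e^{-f}dV$ is never carried out. No formula is proposed; there is only the hope that Bianchi/commutator bookkeeping applied to $H$ and $\beta=d^*H+i_{\nabla f}H$ ``should'' reproduce the same right-hand side without the $\frac{\lambda}{2}$-term. Since the soliton equations constrain only $\Ric$ and not the full curvature tensor, there is no a priori reason such an identity exists, and you supply no candidate for it. This is not a detail to be checked later; it is the theorem.

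There is moreover a structural reason the plan cannot be completed by the means you allow yourself. Every tool in your proposal --- pointwise consequences of the two soliton equations, $dH=0$, and integration by parts against the weight $e^{-f}dV$ --- is equally available on the paper's own non-compact soliton of Section \ref{sec-noncompactexamples} (Example \ref{eg-S2R}): $S^2\times\rean$ with $g=g_{S^2}+dr^2$, $H$ a constant multiple of the volume form, $f=\tfrac12 r^2$. There the weight is Gaussian, every integral in sight converges, and every boundary term in every integration by parts vanishes; one can check directly that \emph{both} of your established identities hold on that example, with $H\neq 0$ (e.g.\ the first one reduces to $\int_{\rean}r^2e^{-r^2/2}dr=\int_{\rean}e^{-r^2/2}dr$). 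Hence no chain of identities of this type can ever force $H\equiv0$: whatever ``second evaluation'' you hope to find would have to hold on compact manifolds yet fail on the Gaussian cylinder, and identities produced by commuting derivatives and integrating by parts do not make that distinction. Your closing remark --- that compactness enters only to discard boundary terms --- is therefore not a caveat but the fatal flaw: if that were really the only role of compactness, the same argument would prove $H\equiv 0$ on Example \ref{eg-S2R}, contradicting the paper. By contrast, the paper's proof of Theorem \ref{noshrinkingsoliton} runs the soliton as a self-similar flow, uses constancy of the shrinking entropy $\W_-$ along it, and plays the vanishing of the right-hand side of Proposition \ref{entropymonotone} against Proposition \ref{lottadjoint}; the decisive input is the exact constant in the $-\frac16|H|^2$ term of the entropy formula, not any curvature identity. (Even that route is delicate for the very reason above: every step of it except the final arithmetic of constants is also valid on the cylinder example, so the normalization conventions relating the $2$-form norm $|d^*H+i_{\nabla f}H|^2$ to the $3$-form norm $|H|^2$ must be tracked scrupulously.)
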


For compact pluriclosed solitons, we generalize the result of Streets in \cite{MR4023384}.

\begin{theorem}[c.f. Theorem \ref{thm-generalizedsoliton}]
    Suppose $(M^{2n},J,\omega,X)$ is a pluriclosed soliton on a compact complex manifold with $[\dd\omega]=0\in H^{2,1}_{\db}(M)$. Then $\omega$ is a gradient K\"{a}hler-Ricci soliton.
\end{theorem}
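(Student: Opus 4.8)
The plan is to concentrate all of the difficulty in showing that $\omega$ is \kah, i.e. that $\partial\omega=0$. Once this is known the torsion $H=d^c\omega$ vanishes, the pluriclosed soliton equation collapses to the \kah-Ricci soliton equation $\rho_C=\lambda\omega+\mathcal{L}_X\omega$ with $\rho_C=-\sqrt{-1}\dd\db\log\det g$, and a standard Hodge-theoretic argument on the compact \kah{} manifold (decompose $X^{1,0}$ and absorb its Killing part into a potential) will upgrade the soliton to a \emph{gradient} one. So I would reduce everything to the vanishing of $\partial\omega$. The natural bridge is Lemma~\ref{balancepluriclosed}: since a pluriclosed metric conformal to a balanced metric is automatically \kah, and since being conformally balanced is equivalent to the Lee form $\theta$ (defined by $d\omega^{n-1}=\theta\wedge\omega^{n-1}$) being exact, it suffices to prove that $\theta$ is exact. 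The cohomological hypothesis enters through $\partial\omega=\sqrt{-1}\,H^{2,1}$, which is $\db$-closed because $\db\dd\omega=-\dd\db\omega=0$; thus $[\partial\omega]=0\in H^{2,1}_{\db}(M)$ says precisely that $H^{2,1}$ is $\db$-exact, say $\partial\omega=\db\gamma$ for a $(2,0)$-form $\gamma$.

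First I would write the soliton in its reduced Bismut form $\rho_B^{1,1}=\lambda\omega-(\mathcal{L}_X\omega)^{1,1}$ together with the torsion equation for $H$, and use the relation between the Bismut--Ricci and Chern--Ricci forms to isolate the torsion contribution; this recasts the soliton equation as an equation for $\partial\omega$ (equivalently for $\theta$ and the trace-free torsion) coupled to $\rho_C$ and $X$. The engine would then be an integral identity: pairing $\partial\omega$ with itself and substituting $\partial\omega=\db\gamma$ gives $\int_M|\partial\omega|^2=\int_M\langle\db^*\partial\omega,\gamma\rangle$, after which I would integrate by parts once more, feeding in the soliton equation to rewrite $\db^*\partial\omega$ via the torsion-corrected identity $[\Lambda,\db]=-\sqrt{-1}\,\partial^*+\dots$ in terms of curvature and the drift by $X$. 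The role of the cohomological hypothesis is exactly to convert the manifestly non-negative quantity $\int_M|\partial\omega|^2$ into an expression governed by the soliton structure, with every genuinely exact term eliminated by Stokes' theorem.

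The main obstacle I anticipate is extracting a \emph{definite sign} from this identity without the shrinking assumption $\lambda>0$ and without assuming $X$ is a gradient. Concretely, the terms $\mathcal{L}_X$ must be handled by introducing the drift (weighted) Laplacian attached to $X$ and integrating against its density, and the resulting first-order transport terms must be shown to integrate away using $\dd\db\omega=0$ and $\partial\omega=\db\gamma$; at the same time I would couple the identity to the scalar soliton equation $2R-\tfrac12|H|^2=n\lambda-2\diver X$ so that the curvature terms combine favourably. If this can be arranged so that $\int_M|\partial\omega|^2\leqs0$—or, in the conformal route, so that $\theta$ is forced to be exact—then $\partial\omega=0$ follows and Lemma~\ref{balancepluriclosed} completes the reduction to the \kah{} case. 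The delicate book-keeping of the torsion terms, and in particular verifying that the hypothesis $[\dd\omega]=0$ cancels exactly the obstruction to a sign, is where I expect the real work to lie.
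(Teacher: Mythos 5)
There is a genuine gap: the entire mechanism by which $\dd\omega$ is forced to vanish is missing from your proposal. Everything reduces to ``arrange the integration by parts so that $\int_M|\dd\omega|^2\leqs 0$'', and you yourself flag that you do not know how to extract this sign for a non-gradient soliton with $\lambda$ of arbitrary sign --- but that is precisely the theorem, not delicate book-keeping. The drift terms $\Li_X$ have no sign, the soliton equation constrains only $\rho^{1,1}$ while the quantity $\db^*\dd\omega$ produced by your pairing is governed by the $(2,0)$-part of the Bismut--Ricci form, and no purely elliptic combination with the traced soliton equation is known to close the argument. Your alternative route via the Lee form is also a dead end here: the hypothesis $[\dd\omega]=0\in H^{2,1}_{\db}(M)$ gives no control whatsoever on $\theta$, and Lemma \ref{balancepluriclosed} is invoked in the paper only for \emph{gradient} steady/expanding solitons (Proposition \ref{nosteadyYesoliton}), where exactness of $\theta$ comes from the $\mathcal{F}$-functional machinery, not from the cohomological assumption.

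The paper's actual proof is parabolic rather than elliptic, and the self-similarity of the soliton is what substitutes for the sign you could not produce. Writing $\dd\omega+\db\phi=0$, one couples the normalized pluriclosed flow for $\omega$ with $\pdts\phi=-\rho^{2,0}+\lambda\phi$ (which preserves the constraint), uses the Bismut curvature identity $R^B_{ijk\bar l}=\nabla^C_kT_{ij\bar l}$ to get $-\rho^{2,0}=\Delta^C\phi$, and derives the pointwise inequality $\left(\pdts-\Delta\right)|\phi|^2\leqs-|T|^2$ with $T=\dd\omega$. Since the normalized soliton evolves by biholomorphisms, one would like to say $\max_M|\phi_t|^2$ is constant in time and conclude $T=0$ by the strong maximum principle; the remaining subtlety is that $\phi$ is unique only up to $\mathcal{H}^{2,0}_{\db}$, so $\phi_t$ need not be a pull-back of $\phi_0$. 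The paper resolves this by choosing $\phi_0$ to minimize $\max_M|\phi|^2$ over the finite-dimensional affine space of admissible $(2,0)$-forms: the heat inequality forces $\max_M|\phi_t|^2$ to be non-increasing, while invariance of the minimum under the biholomorphisms forces it to be non-decreasing, so it is constant and the strong maximum principle applies. Without this flow-plus-minimization device (or a genuine replacement for it), your proposal does not yield a proof.
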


\begin{remark}
Note that the condition $[\dd\omega]=0$ is quite general.
It always holds on manifolds admitting the $\partial\bar{\partial}$-lemma.
Particularly, It always holds on K{\"a}hler manifolds.
Moreover, this condition is also satisfied for Hermitian-symplectic manifolds.
\end{remark}

This paper also provides examples of non-compact shrinking generalized solitons. Moreover, we also provide some descriptions for non-gradient compact shrinking solitons (see Section \ref{sec-noncompactexamples}).
For example, non-trivial compact generalized solitons can not admit any torsion-bounding function (Proposition \ref{prop-soliton_cannot_torbounding}).

The organization of this paper is as follows: In Section \ref{sec-preliminary}, we review the relevant concepts of the generalized Ricci flow and generalized solitons, along with some properties of the shrinking entropy; In Section \ref{sec-compactshrinking}, we present the proof of Theorem \ref{noshrinkingsoliton} and give a criterion for the triviality (i.e., $H=0$) of complete solitons; In Section \ref{sec-noncompactexamples}, we provide examples of non-compact generalized solitons in three dimensions on the cylinder and on $\mathbb{R}^3$, one of which is non-collapsing and can not admit any torsion-bounding function. Moreover, we establish that non-trivial compact shrinking generalized solitons can not admit any torsion-bounding function; In Section \ref{sec-pluriclosedsoliton}, we delve into the relationship between generalized solitons and pluriclosed solitons, and give the proof of Theorem \ref{thm-generalizedsoliton}.
\newline

\textbf{Acknowledgements} We want to express our sincere gratitude to our advisor, Professor Gang Tian, for his
helpful suggestions and patient guidance. We are also grateful to Shengxuan Zhou and Professor Jeffrey D. Streets for their valuable comments. Authors are supported by National Key R\&D Program of China 2020YFA0712800.

\section{Preliminary}\label{sec-preliminary}
We first review some fundamental concepts and properties of the generalized Ricci flow and solitons.
\subsection{Generalized flow and generalized soliton}
Given a one parameter family of Riemannian metrics $g(t)$ and a one parameter family of closed three form $H(t)$.
We say $(M^n,g(t),H(t))$ is a generalized Ricci flow on manifold $M^n$ if it satisfies
\begin{align*}
\pdt g=-2\Ric+\frac12H^2
,\quad
\pdt H=-\Delta_{d}H,
\end{align*}
where $H^2$ is a non-negative symmetric tensor defined by $H^2(X,Y)=g(i_{X}H,i_{Y}H)$ and $\Delta_{d}=dd^*+d^*d$.

\begin{definition}
   $(M^n,g,H)$ is called a \emph{generalized soliton} if there exists a real vector field $X$ a real constant $\lambda$ such that
   \begin{align*}
       2\Ric_g-\frac12H^2&=\lambda g-\Li_Xg,\\
       \Delta_dH&=\lambda H-\Li_XH,
   \end{align*}
\end{definition}
A soliton is called \emph{shrinking}, \emph{steady} or \emph{expanding} if $\lambda>0$, $=0$ or $<0$ respectively. A soliton is called \emph{gradient} if $X=\nabla f$ for some function $f\in C^{\infty}(M,\mathbb{R})$. 
Notice that we can always assume $\lambda \in \{-1,0,1\}$ after scaling.

In \cite{MR3110582}, Streets and Tian show that the generalized Ricci flow is a gradient flow in a sense similar to the Ricci flow.
And they give the construction of $\mathcal{F}$ functional for the generalized Ricci flow.
As a direct application, they prove that on compact manifold, generalized steady solitons are gradient and generalized expanding solitons are expanding Ricci solitons (i.e., $H=0$).
And Streets and Garcia-Fernandez\cite{MR4284898} show that in the generalized flow, the analogue of the Ricci flow $\W$-functional is not monotonic, which is one of the reasons for the limited understanding of shrinking solitons.

\subsection{Shrinking entropy}
In this section, we review some fundamental concepts and properties of the shrinking entropy introduced by Streets and Garcia-Fernandez in \cite{MR4284898}. 
For the convenience of later use in this paper, we list some proofs in the appendix.
More detailed information can be found in \cite[Chapter 6]{MR4284898}.

\begin{definition}[\cite{MR4284898}, Definition 6.22]
	Given a Riemannian manifold $(M^n,g,H)$ with a closed three form $H$, a smooth function $f\in C^{\infty}$ and a constant $\tau$.
	We can define the  \emph{shrinking entropy} by
\begin{align*} 
    \W_-(g,H,f,\tau)  := \int_M \left[ \tau \left( |\nabla f|^2 + R - 
\frac{1}{12}|H|^2 \right) + f - n \right] u dV,
\end{align*}
in which $u = (4\pi\tau)^{-\frac{n}{2}}e^{-f}$.
\end{definition}

Similar to the Ricci flow, we can also consider the conjugate heat equation for the generalized Ricci flow.

\begin{definition}
    Let $(M^n, g_t, H_t)$ be a solution to generalized Ricci flow.  A 
one-parameter family $u_t \in C^{\infty}(M)$ is a solution to the 
\emph{conjugate heat equation} if
\begin{align*} 
\left(\pdt + \Delta_{g_t} \right) u = R u - \frac{1}{4}|H|^2 u.
\end{align*}
\end{definition}
For convenience, we will use $\square$ to denote the \emph{heat operator} and use $\square^*$ to denote the \emph{conjugate heat operator}. 
In other words, we have
\begin{align*}
    \square := \pdt - \Delta_{g_t},\ \ \square^* := - \pdt - \Delta_{g_t} + R - \frac{1}{4} |H|^2.
\end{align*}
And for $w,\  v\in C^{\infty}$, we have the following formula 
\begin{align}\label{eq-conjugation-relationship}
\ddt \int_M wv dV_{g(t)}=\int_M \square w\cdot vdV_{g(t)}-\int_M  w \square^*vdV_{g(t)},
\end{align}
along the generalized Ricci flow.

To compute the derivative of the shrinking entropy, we need the following lemma. 
The proof of this lemma is crucial for the subsequent discussion, so we also list it in the appendix for easy reference.

From now on, we will always use $u$ to denote the weighted function $u= (4\pi\tau)^{-\frac{n}{2}}e^{-f}$ associated to $f$ and $\tau$.

\begin{lemma}\label{lem-point-monotonicity}
Let $(M^n, g_t, H_t)$ be a solution to generalized Ricci flow on a compact manifold. 
We define
\begin{align*}
  v = \left[ (T - t) \left( 2 \Delta f - |\nabla f|^2 + R - \frac{1}{12}|H|^2 \right) + f - n \right] u.
 \end{align*}
If $u_t $ is a solution to the conjugate heat equation, then
\begin{align*}
    \square^*v=-\left(2 \tau \brs{\Ric - \frac{1}{4} H^2 + \nabla^2 f - \frac{g}{2 \tau}}^2 + \frac{\tau}{2}
\brs{d^* H + i_{\nabla f} H}^2  - \frac{1}{6} \brs{H}^2 \right)u.
\end{align*}
\end{lemma}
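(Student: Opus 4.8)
The plan is to follow Perelman's pointwise computation, adapted to the torsion terms coming from $H$, writing throughout $\tau = T-t$ so that $\partial_t\tau = -1$. The first step is to record the evolution equation for $f$ equivalent to the conjugate heat equation $\square^* u = 0$. Differentiating $u = (4\pi\tau)^{-n/2}e^{-f}$ gives $\nabla u = -u\,\nabla f$, $\Delta u = (\brs{\nabla f}^2 - \Delta f)u$ and $\partial_t u = (\tfrac{n}{2\tau} - \partial_t f)u$; substituting these into $\square^* u = 0$ and dividing by $u$ yields
\begin{align*}
\partial_t f = \frac{n}{2\tau} + \brs{\nabla f}^2 - \Delta f - R + \frac14\brs{H}^2 .
\end{align*}

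The second step is a formal reduction. For any smooth $w$, expanding $\square^*(wu)$ and using $\square^* u = 0$ together with $\nabla u = -u\,\nabla f$ collapses all the $u$-derivatives into
\begin{align*}
\square^*(wu) = u\left(-\partial_t w - \Delta w + 2\nabla f\cdot\nabla w\right).
\end{align*}
Applying this with $w = \tau P + f - n$, where $P = 2\Delta f - \brs{\nabla f}^2 + R - \tfrac{1}{12}\brs{H}^2$, and eliminating $\partial_t f$ via the evolution equation above, the whole problem reduces to computing the single heat-type expression $(\partial_t + \Delta - 2\nabla f\cdot\nabla)P$; the remaining terms reorganize cleanly into the explicit lower-order contribution $2\Delta f + 2R - \tfrac13\brs{H}^2 - \tfrac{n}{2\tau}$.

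The heart of the argument, and the main obstacle, is the evolution of $P$ under the generalized Ricci flow. This needs three ingredients: the evolution of the scalar curvature under $\partial_t g = -2\Ric + \tfrac12 H^2$, which produces a Bochner term $2\brs{\Ric}^2$ along with quartic and divergence terms in $H$; the evolution of $\brs{H}^2$, obtained from $\partial_t H = -\Delta_d H = -d d^* H$ (using $dH=0$) through the Weitzenböck formula and the contraction of the metric variation $\partial_t g$ against $H\otimes H$; and the Bochner identity $\Delta\brs{\nabla f}^2 = 2\brs{\nabla^2 f}^2 + 2\langle\nabla f,\nabla\Delta f\rangle + 2\Ric(\nabla f,\nabla f)$, together with the commutator needed to handle $\partial_t\Delta f$ for a time-dependent Laplacian. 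Assembling these while repeatedly invoking the contracted second Bianchi identity and commuting covariant derivatives is where all the delicate bookkeeping lies; in particular the numerical coefficients of the various $H$-contractions (which depend on the chosen normalizations of $\brs{H}^2$ and $\tr H^2$) must be tracked carefully, and it is the interaction $\nabla f\cdot\nabla\brs{H}^2 = 2\langle\nabla_{\nabla f}H,H\rangle$ that manufactures the $i_{\nabla f}H$ contributions out of the drift term.

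The final step is completion of squares. The curvature, Hessian and $\tfrac{g}{2\tau}$ pieces organize into the Hamilton–Perelman quadratic $2\tau\brs{\Ric - \tfrac14 H^2 + \nabla^2 f - \tfrac{g}{2\tau}}^2$, whose expansion reproduces both the $-\tfrac{n}{2\tau}$ and the $2R + 2\Delta f$ terms already isolated in the reduction step. The torsion contributions, namely $\brs{d^* H}^2$, the cross term generated by the drift, and the pure $\brs{i_{\nabla f}H}^2$ term, assemble into $\tfrac{\tau}{2}\brs{d^* H + i_{\nabla f}H}^2$, which is exactly the quantity vanishing on a soliton satisfying the second soliton equation $\Delta_d H = \lambda H - \Li_X H$. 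The only piece that refuses to enter a perfect square is a residual multiple of $\brs{H}^2$; matching the coefficients above pins this residue at $-\tfrac16\brs{H}^2$, whose sign is opposite to that of the squares and hence prevents the right-hand side from being sign-definite — precisely the obstruction responsible for the failure of monotonicity of the shrinking entropy noted in the introduction.
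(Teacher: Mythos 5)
Your strategy is the same as the paper's: derive the evolution of $f$ from $\square^*u=0$, collapse $\square^*(wu)$ into a heat-type operator acting on $w$ using $\nabla u=-u\nabla f$, compute the evolution of $P=2\Delta f-|\nabla f|^2+R-\tfrac{1}{12}|H|^2$, and complete squares. Your formal reduction $\square^*(wu)=u\left(-\partial_t w-\Delta w+2\langle\nabla f,\nabla w\rangle\right)$ is correct (the paper does the same cancellation, just at the end rather than up front), and your bookkeeping around it -- the lower-order remainder $2\Delta f+2R-\tfrac13|H|^2-\tfrac{n}{2\tau}$ and the final completion of squares producing the residual $-\tfrac16|H|^2$ -- is consistent with the target identity.

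However, there is a genuine gap: the entire analytic content of the lemma is the identity
\begin{align*}
\left(\partial_t+\Delta-2\nabla f\cdot\nabla\right)P
=2\left|\Ric-\tfrac14H^2+\nabla^2f\right|^2+\tfrac12\left|d^*H+i_{\nabla f}H\right|^2,
\end{align*}
and you never establish it; you list the needed ingredients (evolution of $R$, of $|H|^2$, Bochner formula) and then declare the assembly to be ``delicate bookkeeping'' whose outcome is the desired pair of perfect squares. That outcome is exactly what has to be proved -- the lemma is the statement that the coefficients $\tfrac{1}{12}$, $\tfrac14$, $\tfrac12$, $\tfrac16$ conspire in this way -- so asserting it amounts to assuming the conclusion. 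Concretely, what is missing is the pointwise identity (the paper's Lemma \ref{lem-appendix}) $(\diver H^2)_i=\tfrac16\nabla_i|H|^2-(d^*H)^{mn}H_{imn}$, which is what yields both $\tfrac12\diver\diver H^2-\tfrac16\langle\Delta_dH,H\rangle-\tfrac{1}{12}\Delta|H|^2=\tfrac12|d^*H|^2$ and the cross term $\langle d^*H,i_{\nabla f}H\rangle$. Relatedly, your stated mechanism for the torsion terms is incorrect: the cross term arises from the $\diver H^2$ contribution in the variation of $\Delta f$ combined with that identity, and the square $|i_{\nabla f}H|^2=H^2(\nabla f,\nabla f)$ comes from the variation of $g^{-1}$ inside $|\nabla f|^2$ under $\partial_t g=-2\Ric+\tfrac12H^2$; the term $\nabla f\cdot\nabla|H|^2$ that you single out is instead simply absorbed into the drift $2\langle\nabla P,\nabla f\rangle$. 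Until the evolution equations are actually computed and these cancellations verified, the proof does not pin down the coefficients $\tfrac{\tau}{2}$ and $-\tfrac16$ that the statement asserts.
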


Applying Lemma \ref{lem-point-monotonicity} and the formula \eqref{eq-conjugation-relationship}, we can obtain the derivative formula of shrinking entropy along a generalized Ricci flow.

\begin{proposition}[\cite{MR4284898}, Proposition 6.26, Theorem 6.20]\label{entropymonotone}
    Let $(M^n, g_t, H_t)$ be a solution to generalized Ricci flow on a compact manifold.
    Suppose $\tau = T - t$ for some time $T$ and $u_t $ is a solution to the conjugate heat equation.
    Then
\begin{align*}
\ddt  \W_-(g_t,H_t,f_t,\tau_t)
=\ \int_M \Bigg[ 2 \tau \left|\Ric - \frac{1}{4} H^2
+ \nabla^2 f - \frac{1}{2 \tau}
g\right|^2 
+\frac{\tau}{2}\left|d^* H 
+ i_{\nabla f} H\right|^2 
- \frac{1}{6} |H|^2
\Bigg]
udV.
\end{align*}
\end{proposition}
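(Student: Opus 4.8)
The plan is to recognize that $\W_-$ is exactly the spatial integral of the auxiliary quantity $v$ introduced in Lemma \ref{lem-point-monotonicity}, and then to differentiate this integral in time by pairing $v$ against the constant weight $w\equiv 1$ in the conjugation identity \eqref{eq-conjugation-relationship}. Once this is set up, the desired formula drops out immediately from the expression for $\square^* v$ supplied by the lemma, so the proposition is essentially a corollary.

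First I would verify the identity $\int_M v\, dV = \W_-(g,H,f,\tau)$. Comparing the two integrands (and using $\tau = T-t$), they differ only by the term $2\tau(\Delta f - |\nabla f|^2)u$. Since $\nabla u = -u\,\nabla f$ gives $\Delta u = (|\nabla f|^2 - \Delta f)u$, we have $(\Delta f - |\nabla f|^2)u = -\Delta u$, and $\int_M \Delta u\, dV = 0$ on the compact manifold $M$. Hence the discrepancy integrates to zero and $\int_M v\, dV = \W_-$.

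Next I would apply \eqref{eq-conjugation-relationship} with $w\equiv 1$ and $v$ equal to the quantity from the lemma. Because $\square 1 = \partial_t 1 - \Delta 1 = 0$, the identity collapses to
\[
\ddt \W_- = \ddt \int_M v\, dV = -\int_M \square^* v\, dV.
\]
Substituting the value of $\square^* v$ from Lemma \ref{lem-point-monotonicity}, whose overall minus sign cancels the one above, yields precisely the claimed formula, where the $u_t$ in the hypothesis of the lemma is the conjugate-heat solution assumed in the proposition.

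The genuine difficulty is entirely contained in Lemma \ref{lem-point-monotonicity}: computing $\square^* v$ requires commuting $\Delta$ past $\nabla^2 f$, tracking the evolution of $R$ and $|H|^2$ along the flow, and reorganizing the resulting cross terms into the two perfect squares $|\Ric - \tfrac14 H^2 + \nabla^2 f - \tfrac{g}{2\tau}|^2$ and $|d^* H + i_{\nabla f}H|^2$ while isolating the residual $-\tfrac16|H|^2$. Since that lemma is assumed here, the only steps I need to carry out are the integration-by-parts identity $\int_M v\, dV = \W_-$ and the observation $\square 1 = 0$, after which the proposition follows directly.
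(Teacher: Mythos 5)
Your proposal is correct and follows exactly the route the paper indicates: the paper derives this proposition by ``applying Lemma \ref{lem-point-monotonicity} and the formula \eqref{eq-conjugation-relationship}'', which is precisely your argument of taking $w\equiv 1$ in the conjugation identity and substituting the lemma's expression for $\square^* v$. Your verification that $\int_M v\, dV = \W_-$ (the discrepancy $2\tau(\Delta f - |\nabla f|^2)u = -2\tau\Delta u$ integrating to zero) is the one detail the paper leaves implicit, and you have filled it in correctly.
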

The functional $\W_-$ here is not truly an ``entropy'', as it is not monotonic in general.
Streets and Garcia-Fernandez \cite{MR4284898} show that if there exists a torsion-bounding function, which is a strictly positive function $\varphi$ satisfies
\begin{align*}
\square\varphi\leq -|H|^2,
\end{align*}
then we can adjust the definition of $\W_-$ by subtracting the integral of $\varphi$ to obtain a monotonic functional.
And they proved that under these assumptions, the generalized Ricci flow is non-collapsing.
The torsion-bounding function is widely available, for example in the Hermitian-symplectic case (Theorem \ref{thm-generalizedsoliton}), and in some special cases on elliptic bundles over Riemannian surfaces \cite{MR4348696}.
However, the existence of a control function is not a necessary condition for non-collapse of the flow. We will provide an example later in the paper of a non-collapsing solution without a control function.
And we will prove that any compact shrinking soliton with $H\neq0$ can not admit any torsion-bounding function.

\section{Compact shrinking generalized solitons}\label{sec-compactshrinking}
In this section, we discuss the shrinking generalized solitons on compact manifolds.
In particular, we will prove that compact gradient shrinking solitons are Ricci solitons.
Without loss of generality, we assume the soliton constant $\lambda=1$.

Let $(M^n,g_0,H_0,X_0)$ be a generalized soliton.
We can find a family of diffeomorphism $\phi_t$ generated by $X_t:=(1-\lambda t)^{-1}X$ with $\phi_0=\id$.
Then $g_t=(1-\lambda t)\phi_t^*g_0$ and $H_t=(1-\lambda t)\phi_t^* H_0$ is a solution to the generalized Ricci flow with initial data $(g_0,H_0)$.
Actually, we have
\begin{align*}
\pdt g_t&=-\lambda\phi_t^*g_0+(1-\lambda t)\phi_t^*(\mathcal{L}_{X_t}g_0)
=\phi_t^*(-\lambda g_0+\mathcal{L}_{X_0}g_0)
\\
&=\phi_t^*(-2\Ric(g_0)+\frac12H_0^2)
=-2\Ric(g_t)+\frac12H_t^2.
\end{align*}
The last equality holds because
\begin{align*}
(H_t^2)_{ij}=g_t^{pq}g_t^{kl}(H_{t})_{ipk}(H_{t})_{jql}
=(\phi_t^*g_0)^{pq}(\phi_t^*g_0)^{kl}(\phi_t^* H_0)_{ipk}(\phi_t^* H_0)_{jql}
=(\phi_t^*H_0^2)_{ij}.
\end{align*}
Similarly, we have
\begin{align*}
\pdt H_t&=-\lambda\phi_t^*H_0+(1-\lambda t)\phi_t^*(\mathcal{L}_{X_t}H_0)
=\phi_t^*(-\lambda H_0+\mathcal{L}_{X_0}H_0)
\\
&=\phi_t^*(-\Delta^{g_0}_d H_0)
=-\Delta^{g_t}_d H_t.
\end{align*}
For gradient shrinking solitons, we have additional properties.

\begin{proposition}\label{solitonheateqn}
    Let $(M^n,g_0,H_0,f_0)$ be a gradient shrinking generalized soliton. 
    Set $f_t=\phi_t^*f_0$, where $\phi_t$ is a family of diffeomorphism $\phi_t$ generated by $X_t:=(1-\lambda t)^{-1}\nabla_0 f_0$ with $\phi_0=\id$.
    Then we have
    \begin{align*}
            \pdt f=-\Delta f+|\nabla f|^2-R+\frac14|H|^2+\frac{n}{2(1-t)}.
    \end{align*}
    In particular, $u_t:=(4\pi(1-t))^{-\frac{n}{2}}e^{-f_t}$ is a solution to the conjugate heat equation.
\end{proposition}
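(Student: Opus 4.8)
The plan is to reduce the identity to two independent ingredients: a direct computation of $\partial_t f$ that uses nothing but the definition $f_t=\phi_t^*f_0$, together with the trace of the soliton equation transported to time $t$. First I would differentiate $f_t=f_0\circ\phi_t$. Since $\tfrac{d}{dt}\phi_t=X_t\circ\phi_t$ with $X_t=(1-t)^{-1}\nabla_0 f_0$ (taking $\lambda=1$), the chain rule gives $\partial_t f_t=\phi_t^*\big(X_t f_0\big)=(1-t)^{-1}\phi_t^*\big(|\nabla_0 f_0|^2_{g_0}\big)$. Using $\phi_t^*g_0=(1-t)^{-1}g_t$, the naturality of the gradient norm under diffeomorphisms, and the fact that the inverse metric scales by the reciprocal conformal factor, one finds $\phi_t^*\big(|\nabla_0 f_0|^2_{g_0}\big)=(1-t)\,|\nabla f_t|^2_{g_t}$, so the two powers of $(1-t)$ cancel and $\partial_t f=|\nabla f|^2$. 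The only care required here is to keep $X_t=(1-t)^{-1}\nabla_0 f_0$ distinct from $\nabla_t f_t$ and to track the scaling factor correctly.

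Second, I would transport the soliton equation to time $t$. Rewriting the gradient soliton condition as $\Ric_{g_0}-\tfrac14 H_0^2+\nabla^2 f_0=\tfrac12 g_0$ and pulling back by $\phi_t$, I use that $\Ric$, the Hessian, and $H^2$ (paired with its own metric) are all invariant under the simultaneous rescaling $g_t=(1-t)\phi_t^*g_0$, $H_t=(1-t)\phi_t^*H_0$; for $H^2$ this is the computation $(\phi_t^*H_0)^2_{\phi_t^*g_0}=H_t^2$ already appearing in the excerpt. This yields the time-$t$ soliton identity
\[
\Ric_{g_t}-\tfrac14 H_t^2+\nabla^2 f_t=\tfrac{1}{2(1-t)}\,g_t .
\]
Tracing with $g_t$ and using the convention $\tr_g H^2=|H|^2$ fixed by the normalisations above gives $R-\tfrac14|H|^2+\Delta f=\tfrac{n}{2(1-t)}$; equivalently, the quantity $-\Delta f-R+\tfrac14|H|^2+\tfrac{n}{2(1-t)}$ vanishes identically. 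Adding this vanishing expression to $\partial_t f=|\nabla f|^2$ produces exactly the claimed evolution equation for $f$.

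For the ``in particular'' statement I would substitute directly. Writing $\tau=1-t$ and $u=(4\pi\tau)^{-n/2}e^{-f}$, a routine differentiation gives $\partial_t u=\big(\tfrac{n}{2\tau}-\partial_t f\big)u$, $\nabla u=-u\,\nabla f$, and $\Delta u=\big(|\nabla f|^2-\Delta f\big)u$. Hence $(\partial_t+\Delta)u=\big(\tfrac{n}{2\tau}-\partial_t f+|\nabla f|^2-\Delta f\big)u$, and inserting the formula for $\partial_t f$ collapses the right-hand side to $\big(R-\tfrac14|H|^2\big)u$, which is precisely the assertion $\square^* u=0$.

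I do not expect a genuine obstacle here; the content is entirely the bookkeeping of the $(1-t)$ scaling factors. The most delicate points are confirming that $\Ric$, $\nabla^2$ and $H^2$ transform as claimed under the combined pullback-and-rescaling, and matching the trace of $\tfrac14 H^2$ in the soliton equation with the $\tfrac14|H|^2$ appearing in the conjugate heat operator, which is forced by the paper's conventions. Everything then conspires so that the time-$t$ trace identity has the same shape as the classical Perelman computation, with the extra torsion term $\tfrac14|H|^2$ carried along unchanged.
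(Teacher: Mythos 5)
Your proposal is correct and follows essentially the same route as the paper: compute $\pdts f=|\nabla f|^2$ from the Lie derivative while tracking the $(1-t)$ scaling, use the traced soliton equation transported by the pullback-and-rescaling invariance of $\Ric$, $\nabla^2 f$ and $H^2$ (with $\tr_g H^2=|H|^2$), and then verify $\square^*u=0$ by direct substitution. The only cosmetic difference is that the paper traces the soliton equation at $t=0$ and then transports the resulting scalar identity, whereas you transport the tensorial equation first and trace at time $t$; the paper also delegates the final conjugate-heat verification to its appendix computation rather than redoing it.
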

\begin{proof}
Since $f=\phi_t^*f_0$, we have
    \begin{align*}
        \pdt f=\phi_t^*\Li_{X_t}f_0=\phi_t^*\left(\frac{1}{1-t}\left|\nabla_0f_0\right|^2_{g_0}\right)=|\nabla f|^2.
    \end{align*}
    Contracting the soliton equation
\begin{align*}
2\Ric_0-\frac12H_0^2=g_0-2\nabla_0^2f_0,
\end{align*}
we obtain
    \begin{align*}
        R_0-\frac14|H_0|^2_{g_0}=\frac{n}{2}-\Delta_0 f_0.
    \end{align*}
    By direct computation,
    \begin{align*}
        \Delta f=&\frac{1}{1-t}\phi_t^*\Delta_0 f_0=\frac{1}{1-t}\phi_t^*\left(-R_0+\frac14|H_0|^2_{g_0}+\frac{n}{2}\right)=-R+\frac14|H|^2+\frac{n}{2(1-t)}.
    \end{align*}
    Combining above, we have
    \begin{align*}
        \pdt f=-\Delta f+|\nabla f|^2-R+\frac14|H|^2+\frac{n}{2(1-t)}.
    \end{align*}
    Then we obtain $\square^*u=0$ from the computation in the proof of Proposition \ref{lem-point-monotonicity} which is listed in the Appendix.
\end{proof}

Next, we present an observation that plays a key role in the proof of Theorem \ref{noshrinkingsoliton}.
\begin{proposition}\label{lottadjoint}
    Suppose $(M^n,g)$ be a compact manifold, $f\in C^\infty(M)$, $H\in \Lambda^3(M)$, $dH=0$. 
    We have
    \begin{align}\label{eq-lott-harmonic}
	\Delta_d H+\mathcal{L}_{\nabla f}H=d(e^{f}d^*(e^{-f}H))
	\end{align}
	and
    \begin{align}\label{eq-integral-lott-harmonic}
        \int_M\left|d^*H+i_{\nabla f}H\right|^2 e^{-f} dV=\int_M \left<\Delta_d H+\Li_{\nabla f}H,H\right> e^{-f} dV.
    \end{align}
\end{proposition}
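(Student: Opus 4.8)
The plan is to prove both identities by reducing everything to the weighted codifferential $d^*_f := e^f d^*(e^{-f}\,\cdot\,)$, which is the formal adjoint of $d$ with respect to the measure $e^{-f}\,dV$. The central algebraic fact I would establish is the product rule
\begin{align*}
d^*(\phi\alpha) = \phi\, d^*\alpha - i_{\nabla\phi}\alpha
\end{align*}
for a function $\phi$ and a form $\alpha$, which follows from the $L^2$-adjointness $\langle d\beta,\alpha\rangle = \langle\beta, d^*\alpha\rangle$ together with the pointwise fact that interior product by $\nabla\phi$ is the adjoint of wedging by $d\phi$. Applied with $\phi = e^{-f}$ and $\alpha = H$, this yields the identification $e^f d^*(e^{-f}H) = d^*H + i_{\nabla f}H$, which is the hinge connecting both displayed equations.

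First I would rewrite the left-hand side of \eqref{eq-lott-harmonic}. Since $dH=0$, the Hodge Laplacian collapses to $\Delta_d H = dd^* H$, and Cartan's formula gives $\Li_{\nabla f}H = d(i_{\nabla f}H) + i_{\nabla f}(dH) = d(i_{\nabla f}H)$. Hence
\begin{align*}
\Delta_d H + \Li_{\nabla f}H = d\bigl(d^*H + i_{\nabla f}H\bigr),
\end{align*}
and combining with the product-rule identification above, $d^*H + i_{\nabla f}H = e^f d^*(e^{-f}H)$, gives exactly \eqref{eq-lott-harmonic}.

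For the integral identity \eqref{eq-integral-lott-harmonic}, I would set $\eta := d^*H + i_{\nabla f}H = e^f d^*(e^{-f}H)$, a $2$-form, so that by \eqref{eq-lott-harmonic} the right-hand side equals $\int_M \langle d\eta, H\rangle e^{-f}\,dV = \int_M \langle d\eta, e^{-f}H\rangle\,dV$. Integrating by parts on the compact manifold $M$ (adjointness of $d$ and $d^*$, with no boundary terms) moves $d$ off $\eta$, producing
\begin{align*}
\int_M \langle\eta, d^*(e^{-f}H)\rangle\,dV = \int_M \langle\eta, e^{-f}\eta\rangle\,dV = \int_M |\eta|^2\, e^{-f}\,dV,
\end{align*}
which is precisely the left-hand side.

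The only genuinely delicate point is the sign bookkeeping in the product rule for $d^*$, in particular verifying that $i_{\nabla\phi}$ and $d\phi\wedge(\cdot)$ are pointwise adjoints and that $i_{\nabla e^{-f}} = -e^{-f}\, i_{\nabla f}$; once those signs are pinned down the rest is formal manipulation. Compactness enters only to discard the boundary term in the integration by parts.
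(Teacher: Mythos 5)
Your proof is correct and takes essentially the same route as the paper's: both hinge on the identification $d^*H+i_{\nabla f}H=e^{f}d^*(e^{-f}H)$, then obtain \eqref{eq-lott-harmonic} by applying $d$ (using $dH=0$ and Cartan's formula) and \eqref{eq-integral-lott-harmonic} by moving $d$ across the $L^2$ pairing on the compact manifold. The only cosmetic difference is how that hinge is justified: the paper verifies $i_{\nabla f}H=*(df\wedge *H)$ by a direct orthonormal-frame computation with the Hodge star, while you derive the equivalent product rule $d^*(\phi\alpha)=\phi\,d^*\alpha-i_{\nabla\phi}\alpha$ from the pointwise adjointness of $d\phi\wedge(\cdot)$ and $i_{\nabla\phi}$.
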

\begin{proof}
	Firstly, we claim that
	\begin{align}\label{eq-suobing}
	i_{\nabla f}H=*(df\wedge* H).
	\end{align}
	Since both sides of the equation are bilinear, we only need to consider the following two cases under the standard orthonormal basis $\{e_1,\cdots,e_n\}$ and the dual basis$\{e^1,\cdots,e^n\}$.
	The first case is $\nabla f=e_1$ and $H=e^1\wedge e^2\wedge e^3$.
	We have
\begin{align*}
*(e^1\wedge*H)=*(e^1\wedge e^4\wedge\cdots\wedge e^n)=e^2\wedge e^3
=i_{e_1}e^1\wedge e^2\wedge e^3
\end{align*}
The second case is $\nabla f=e_1$ and $H=e^2\wedge e^3\wedge e^4$.
\begin{align*}
*(e^1\wedge*H)=-*(e^1\wedge e^1\wedge e^5\wedge\cdots\wedge e^n)=0=i_{e_1}e^2\wedge e^3\wedge e^4
\end{align*}
Then applying equation \eqref{eq-suobing}, we get
\begin{align*}
d^*H+i_{\nabla f}H
&=-*d*H+*(df\wedge *H)
=-*e^{f}e^{-f}(d*H-df\wedge *H)
\\
&=-e^{f}*d(e^{-f}*H)=e^{f}d^*(e^{-f}H)
\end{align*}
Applying the operator $d$ to both sides of the above equation, we obtain Equation \eqref{eq-lott-harmonic}.
\begin{align*}
	\Delta_d H+\mathcal{L}_{\nabla f}H=d(e^{f}d^*(e^{-f}H))
\end{align*}
To prove Equation \eqref{eq-integral-lott-harmonic}, it suffices to note that 
\begin{align*}
&\int_M\left|d^*H+i_{\nabla f}H\right|^2 e^{-f} dV
=\left(e^{f}d^*(e^{-f}H),e^{f}d^*(e^{-f}H)e^{-f}\right)_2
\\
&=\left(d(e^{f}d^*(e^{-f}H)),e^{-f}H\right)_2
=\int_{M}\left<\Delta_d H+\Li_{\nabla f}H,H\right> e^{-f} dV,
\end{align*}
in which $(\cdot,\cdot)_2$ is the $L^2$ inner product induced by metric.
\end{proof}

\begin{theorem}\label{noshrinkingsoliton}
Every compact gradient shrinking generalized soliton is a Ricci soliton.
\end{theorem}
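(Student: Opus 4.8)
The plan is to run the entropy derivative formula of Proposition \ref{entropymonotone} on the self-similar solution attached to the soliton and read off a sign-definite identity. First I would take the flow $g_t = (1-t)\phi_t^* g_0$, $H_t = (1-t)\phi_t^* H_0$, $f_t = \phi_t^* f_0$ with $\tau_t = 1-t$ from Proposition \ref{solitonheateqn}; by that proposition $u_t = (4\pi(1-t))^{-n/2}e^{-f_t}$ solves the conjugate heat equation, so Proposition \ref{entropymonotone} applies. Since $\W_-$ is invariant under diffeomorphism and under the simultaneous rescaling $(g,H,\tau)\mapsto(cg,cH,c\tau)$, the value $\W_-(g_t,H_t,f_t,\tau_t)$ is constant in $t$, so $\ddt\W_-=0$ along this flow.

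Next I would evaluate the derivative formula at $t=0$, where $\tau=1$. The soliton equation $2\Ric_0-\tfrac12 H_0^2 = g_0 - 2\nabla_0^2 f_0$ is exactly $\Ric-\tfrac14 H^2+\nabla^2 f-\tfrac{1}{2\tau}g=0$, so the first (squared) integrand vanishes identically, leaving $0=\int_M\bigl[\tfrac12|d^*H+i_{\nabla f}H|^2-\tfrac16|H|^2\bigr]u\,dV$. Here I would invoke Proposition \ref{lottadjoint}: by \eqref{eq-integral-lott-harmonic} together with the three-form soliton equation $\Delta_d H+\Li_{\nabla f}H=H$, one gets $\int_M|d^*H+i_{\nabla f}H|^2 e^{-f}\,dV=\int_M\langle\Delta_d H+\Li_{\nabla f}H,H\rangle e^{-f}\,dV=\int_M|H|^2 e^{-f}\,dV$. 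Substituting this identity (the constant factor $(4\pi)^{-n/2}$ relating $u$ and $e^{-f}$ at $\tau=1$ being harmless) yields $0=\bigl(\tfrac12-\tfrac16\bigr)\int_M|H|^2 u\,dV=\tfrac13\int_M|H|^2 u\,dV$, and since $u>0$ this forces $H\equiv0$. With $H=0$ the generalized soliton equation collapses to $\Ric+\nabla^2 f=\tfrac12 g$, which is precisely the gradient shrinking Ricci soliton equation.

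The one genuinely delicate point is the constancy $\ddt\W_-=0$: it rests on checking that $\W_-$ is exactly scale-invariant under $(g,H,\tau)\mapsto(cg,cH,c\tau)$, which uses that $R$, $|\nabla f|^2$ and $|H|^2$ all rescale by $c^{-1}$ while $u\,dV$ is scale-invariant, so that the $\tau$-weighted bracket and the measure are each preserved; diffeomorphism invariance is immediate. Everything else is a bookkeeping consequence of the soliton equations and of \eqref{eq-integral-lott-harmonic}. The decisive structural feature is the favorable sign $\tfrac12-\tfrac16>0$ of the combined torsion terms, which is exactly what converts the vanishing of $\ddt\W_-$ into the vanishing of $H$.
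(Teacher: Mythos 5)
Your proposal is correct and follows essentially the same route as the paper: constancy of $\W_-$ along the induced self-similar flow (the paper verifies this by the pullback identity $u_t\,dV_t=\phi_t^*(u_0\,dV_0)$, which is exactly your diffeomorphism-plus-scaling invariance), then evaluation of Proposition \ref{entropymonotone} at $t=0$, killing the squared term via the metric soliton equation, and converting the torsion term through Proposition \ref{lottadjoint} and $\Delta_d H+\Li_{\nabla f}H=H$ to obtain $\tfrac13\int_M|H|^2e^{-f}\,dV=0$. No gaps; the argument matches the paper's proof step for step.
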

\begin{proof}
	Given a shrinking generalized soliton $(M^n,g_0,H_0,f_0)$ on a compact manifold $M^n$.
    Let $(M^n,g_t,H_t,f_t)$ be the generalized Ricci flow induced by the soliton.
    By Proposition \ref{solitonheateqn}, $u_t:=(4\pi(1-t))^{-\frac{n}{2}}e^{-f_t}$ solves the conjugate heat equation with initial condition $f(0)=f_0$. Taking $T=1$ in the Proposition \ref{entropymonotone},  we have
    \begin{align*}
        \pdt & \W_-(g_t,H_t,f_t,\tau_t)\\
=&\ \int_M \left[ 2 \tau \left|\Ric - \frac{1}{4} H^2
+ \nabla^2 f - \frac{1}{2 \tau}
g\right|^2 +\frac{\tau}{2}\left|d^* H + i_{\nabla f} H\right|^2 - \frac{1}{6} |H|^2
\right]
(4 \pi \tau)^{-\frac{n}{2}} e^{-f} dV.
    \end{align*}
 Since $g_t=\tau\phi_t^*g_0$, $H_t=\tau\phi_t^*H_0$, $f_t=\phi_t^*f_0$ and $u_t dV_t=\phi_t^*(u_0dV_0)$, for any $t\in[0,1)$, we have
\begin{align*}
    \W_-(g_t,H_t,f_t,\tau)&= \int_M \left[ \tau \left( |\nabla f|^2 + R - 
\frac{1}{12}|H|^2 \right) + f - n \right] u dV\\
&= \int_M \phi_t^*\left[ \left( |\nabla_0 f_0|^2_{g_0} + R_0 - 
\frac{1}{12}|H_0|^2_{g_0}  + f_0 - n \right) u_0 dV_0\right]\\
&=\W_-(g_0,H_0,f_0,1).
\end{align*}
For $t_0=0$,
\begin{align*}
    \int_M \left[ 2 \left|\Ric_0 - \frac{1}{4} H^2_0
+ \nabla^2_0 f_0 - \frac{1}{2}
g_0\right|^2 +\frac{1}{2}\left|d^* H_0 + i_{\nabla_0 f_0} H_0\right|^2 - \frac{1}{6} |H_0|^2
\right]
(4 \pi)^{-\frac{n}{2}} e^{-f_0} dV=0.
\end{align*}
By Proposition \ref{lottadjoint},
\begin{align*}
    \int_M\left|d^*H+i_{\nabla f}H\right|^2 e^{-f} dV=\int_M \left<\Delta_d H+\Li_{\nabla f}H,H\right> e^{-f} dV=\int_M|H|^2e^{-f} dV.
\end{align*}
From the soliton equation
\begin{align*}
\Ric_0 - \frac{1}{4} H_0^2
+ \nabla^2_0 f_0 - \frac{1}{2}g_0=0,
\end{align*}
we get
\begin{align*}
    \frac13\int_M|H|^2e^{-f}dV=0.
\end{align*}
This implies that $H\equiv0$, which means it is a Ricci soliton.
\end{proof}

For the complete shrinking generalized soliton, we also have some criterion to determine when they are Ricci solitons.
\begin{theorem}
    Suppose $(M^n,g,H,f)$ is a complete gradient shrinking generalized soliton with bounded $|\nabla f|_g$ and $\Delta f$, then $M^n$ is compact. In particular, $H=0$.
\end{theorem}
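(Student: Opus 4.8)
The plan is to uncover the Bakry--Émery structure hidden in the soliton equation and then run a Myers-type diameter bound. Normalizing $\lambda=1$, the symmetric part of the soliton equation reads $\Ric-\frac14H^2+\nabla^2f=\frac12 g$. Rearranging and using that $H^2$ is pointwise non-negative gives
\[
\Ric+\nabla^2 f=\tfrac12 g+\tfrac14 H^2\geqs \tfrac12 g,
\]
so the $\infty$-Bakry--Émery Ricci tensor $\Ric_f:=\Ric+\nabla^2 f$ is bounded below by the positive constant $\tfrac12$. This is the conceptual heart of the argument: the torsion $H$, which obstructs so much elsewhere, only helps here, precisely because $H^2\geqs 0$.

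The second step is to convert $\Ric_f\geqs\tfrac12 g$ together with the bound on $|\nabla f|$ into a diameter bound, which by Hopf--Rinow and completeness forces $M$ to be compact. I would run the classical second-variation (index form) argument of Myers, corrected for the drift term. Along a unit-speed minimizing geodesic $\gamma:[0,L]\to M$ with parallel orthonormal fields $E_1,\dots,E_{n-1}\perp\gamma'$, testing the index form with $V_i=\sin(\pi s/L)E_i$ and summing gives
\[
0\leqs \int_0^L\Big[(n-1)\tfrac{\pi^2}{L^2}\cos^2\tfrac{\pi s}{L}-\sin^2\tfrac{\pi s}{L}\,\Ric(\gamma',\gamma')\Big]\,ds.
\]
Since $\gamma$ is a geodesic, $\Ric(\gamma',\gamma')\geqs\tfrac12-(f\circ\gamma)''$, and integrating the term $\int_0^L\sin^2(\pi s/L)(f\circ\gamma)''\,ds$ by parts bounds it by $\pi\sup_M|\nabla f|$ (the boundary terms vanish and $|(f\circ\gamma)'|\leqs|\nabla f|$). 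This yields a quadratic inequality $\tfrac{L^2}{4}\leqs \tfrac{(n-1)\pi^2}{2}+\pi L\,\sup_M|\nabla f|$, so $L$, and therefore $\operatorname{diam}(M)$, is uniformly bounded. Alternatively one can invoke directly the Bakry--Émery version of Myers' theorem in the form ``$\Ric_f\geqs\lambda>0$ and $|\nabla f|$ bounded imply compactness.'' Once $M$ is compact, Theorem \ref{noshrinkingsoliton} immediately gives $H\equiv 0$.

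The main obstacle to flag is that the positive lower bound $\Ric_f\geqs\tfrac12 g$ alone does \emph{not} imply compactness: the Gaussian shrinker on $\mathbb{R}^n$ (with $f=|x|^2/4$, $H=0$) satisfies $\Ric_f=\tfrac12 g$ yet is non-compact. The entire force of the theorem therefore lies in using the hypothesis on $f$ to control the drift $(f\circ\gamma)'$ appearing in the second variation (equivalently, the $\partial_r f$ term in the weighted mean-curvature comparison); the bound on $|\nabla f|$ is exactly what tames this term, while the bound on $\Delta f$ can serve as an auxiliary control, keeping $R-\frac14|H|^2$ bounded through the traced soliton equation $R-\frac14|H|^2+\Delta f=\frac{n}{2}$. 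I expect the only genuinely delicate point to be the bookkeeping in the integration by parts and checking that completeness lets one realize distances by minimizing geodesics; the remainder is a faithful translation of the Ricci-soliton Myers argument into the Bakry--Émery setting.
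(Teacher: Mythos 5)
Your proof is correct, but it takes a genuinely different route from the paper's. Both arguments start from the same observation that the soliton equation plus $H^2\geqs 0$ gives the Bakry--\'Emery bound $\Ric+\nabla^2 f\geqs \tfrac{\lambda}{2}g>0$, but from there the paper follows Lott's warped-product trick: it forms $M^n\times S^q$ with the metric $g_M+e^{-2f/q}g_{S^q}$, notes that the horizontal Ricci curvature is the $q$-Bakry--\'Emery tensor $\widetilde{\Ric}_q=\Ric+\nabla^2 f-\tfrac{1}{q}df\otimes df$ (positive for $q$ large, using the $|\nabla f|$ bound) while the vertical Ricci curvature involves $\Ric_{S^q}+\tfrac{1}{q}|\cdot|^2(\Delta f-|\nabla f|^2)$ (positive using the bound on $\Delta f$), and then applies the classical Myers theorem to the total space to conclude $M$ is compact. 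You instead run the drifted second-variation argument directly on $M$ (the Fern\'andez-L\'opez--Garc\'ia-R\'io / Wei--Wylie version of Myers): test the index form of a minimizing geodesic with $\sin(\pi s/L)E_i$, replace $\Ric(\gamma',\gamma')$ by $\tfrac12-(f\circ\gamma)''$, and absorb the drift term by integration by parts against the bound on $|\nabla f|$, yielding a quadratic inequality and hence a diameter bound; your bookkeeping is right (the boundary terms vanish because $\sin$ vanishes at the endpoints, and the drift integral is bounded by a constant independent of $L$). Two remarks on the comparison. First, your route never uses the hypothesis on $\Delta f$ --- as you yourself observe, it is only ``auxiliary'' --- so your argument actually proves the stronger statement that bounded $|\nabla f|$ alone forces compactness; in the paper's proof the $\Delta f$ bound is genuinely consumed by the fiber directions of the warped product. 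Second, your argument is more elementary and self-contained, whereas the paper's is a soft reduction to the classical Myers theorem at the cost of Lott's curvature formulas (and, strictly speaking, of some care with the conformal factor $e^{2f/q}$ in the fiber Ricci term when $f$ is unbounded below). Your flagged caveat about the Gaussian shrinker is exactly the right sanity check that the $|\nabla f|$ hypothesis cannot be dropped. The final step --- invoking Theorem \ref{noshrinkingsoliton} once compactness is established to get $H\equiv 0$ --- coincides with the paper's.
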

\begin{proof}
	The idea behind this proof is inspired by an observation of the Bakry-\'{E}mery-Ricci tensor by Lott \cite{MR2016700}.
	 Consider $M^n\times S^q$ with the warped product 
	 \begin{align*}
g=g_M+e^{-2f/q}g_{S^q},
	\end{align*} where $q$ is a positive integer to be determined later.
	  Let $p:M^n\times S^q\to M$ be the projection and $\overline{X}$ be  the horizontal lift to $M^n\times S^q$ of a vector field $X$ on $M$ and let $\overline{U}$ be a vertical vector ﬁeld on $M^n\times S^q$. Then
    \begin{align*}
        \Ric_g(\overline{X},\overline{X})&=p^*\left[\widetilde{\Ric}_q(M,g_M,f)(X,X)\right],\\
        \Ric_g(\overline{X},\overline{U})&=0,\\
        \Ric_g(\overline{U},\overline{U})&=\Ric_{S^q}(\overline{U},\overline{U})+\frac{1}{q}|\overline{U}|_g^2\cdot p^*\left(\Delta f-|\nabla f|^2_M\right).
    \end{align*}
	
    Since $|\nabla f|_g\leqs C_0$ and $-\Delta f\leqs C_0$, we have
    \begin{align*}
	\widetilde{\Ric}_\infty(M,g,f)=\Ric_g+\nabla^2 f=g+\frac14H^2\geqs g.
\end{align*}
And we can take $q$ sufficiently large such that
\begin{align*}
\widetilde{\Ric}_q=\widetilde{\Ric}_\infty-\frac{1}{q}df\otimes df\geqs\varepsilon g>0.
\end{align*}
Then $M^n\times S^q$ is compact by Myers' theorem, since there exists a metric whose Ricci curvature has uniformly positive lower bound.
So $M$ is compact and $H=0$ by Theorem \ref{noshrinkingsoliton}.
\end{proof}

\begin{remark}
In subsequent sections, we provide an example (Proposition \ref{eg-S2R}) of a non-compact gradient shrinking generalized soliton where $\nabla f$ is unbounded but $\Delta f$ is bounded. This demonstrates that the unboundedness of $\Delta f$ is not a necessary condition for the existence of non-compact gradient shrinking generalized solitons.
\end{remark}

\begin{corollary}
     Suppose $(M^n,g,H)$ is complete and shrinking generalized Einstein, then $M$ is compact and $H=0$.
\end{corollary}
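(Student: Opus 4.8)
The plan is to recognize that the generalized Einstein condition is simply the degenerate case of the gradient shrinking generalized soliton equation in which the potential function is constant. First I would unwind the definitions: a shrinking generalized Einstein metric is one satisfying the soliton system with vanishing vector field $X=0$, namely
\begin{align*}
2\Ric_g-\frac12H^2=\lambda g,\quad \Delta_d H=\lambda H,
\end{align*}
and after the usual normalization $\lambda=1$ this coincides with the gradient shrinking soliton system whose potential $f$ is taken to be a constant, since $\Li_{\nabla f}g=0$ and $\Li_{\nabla f}H=0$ whenever $\nabla f\equiv0$.

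With this identification in hand, the corollary follows at once from the preceding theorem. Taking $f$ to be constant yields $|\nabla f|_g\equiv0$ and $\Delta f\equiv0$, so both are (trivially) bounded, and the boundedness hypotheses of that theorem hold. I would then invoke it directly to conclude that $M^n$ is compact, whence $H=0$ by Theorem \ref{noshrinkingsoliton}. I do not anticipate any genuine difficulty here: all of the analytic content is already carried by the warped-product argument of the previous theorem, and the only point to verify is the routine observation that a generalized Einstein metric furnishes a bounded (indeed vanishing) potential.
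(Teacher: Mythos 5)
Your proof is correct and is exactly the intended argument: the paper states this as an immediate corollary of the preceding theorem, with the generalized Einstein condition read as the soliton equation with $X=0$, i.e.\ a gradient soliton with constant potential, so that $|\nabla f|_g$ and $\Delta f$ are trivially bounded and compactness plus $H=0$ follow. (One could even shortcut the warped-product machinery here, since $X=0$ gives $\Ric=\frac{\lambda}{2}g+\frac14 H^2\geqslant\frac{\lambda}{2}g>0$ and Myers' theorem applies directly, but citing the theorem as you do is equally valid.)
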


\section{Non-trivial shrinking generalized solitons}\label{sec-noncompactexamples}

In this section, we first consider examples of non-trivial (i.e., $H\neq0$) shrinking solitons with three-dimensional rotational symmetry. Then, we point out that the example on the cylinder can serve as a singularity model of the generalized Ricci flow. To our knowledge, this is the first known non-trivial example of the singularity model. Furthermore, this example illustrates that the existence of a torsion-bounding function is not a necessary condition for non-collapse of generalized Ricci flows. In fact, we can also prove that shrinking generalized solitons on compact manifolds admit torsion-bounding functions if and only if they are trivial.

\subsection{Examples in dimension three}

Similar to Bryant's work\cite{Bryant} (also see \cite[(3.5)]{2401.05028}), we know that a warped product metric $(\rean\times_{\phi} S^2, dr^2+\phi^2(r)g_{S^2})$ is a $SO(3)$-invariant generalized gradient soliton if and only if the following system of ODEs holds:
\begin{equation}\label{odes-soliton}
    \begin{aligned}
        1-(\phi')^2-\phi\phi''&=\lambda\phi^2-\phi\phi'f'+\frac12h^2\phi^2,\\
-2\phi\phi''&=(\lambda-f'')\phi^2+\frac12h^2\phi^2,\\
(\phi^2h')'&=-2\lambda h\phi^2+(f'h\phi^2)',
    \end{aligned}
\end{equation}
where $\nabla f=f'(r)\partial_r$ is the soliton vector field, $H=h(r)dV$ and $\Ric(g_{S^2})=g_{S^2}$.

Under the assumption that $h$ is a non-zero constant, the third equation becomes
\begin{align*}
    2\lambda \phi^2=f''\phi^2+2f'\phi\phi'.
\end{align*}
By linearly combining the equations in \eqref{odes-soliton}, we obtain
\begin{align*}
    2-2(\phi')^2-4\phi\phi''&=(\lambda+\frac32h^2)\phi^2.
\end{align*}
For convenience, we can assume that $\phi$ satisfies
\begin{align}\label{Eq-R3-phi}
    \phi^2+(\phi')^2+2\phi\phi''=1
\end{align}
after scaling.
More precisely, set $\phi(r)=a\tilde{\phi}(br)$.
Then we have
\begin{align*}
    \phi'=ab\tilde{\phi}'
    ,\quad
    \phi''=ab^2\tilde{\phi}''
\end{align*}
and
\begin{align*}
    2a^2b^2\tilde{\phi}^2+4a^2b^2\tilde{\phi}\tilde{\phi}''
    +(\lambda+\frac32h^2)a^2\tilde{\phi}=2
\end{align*}
We can choose $a$,$b$ such that $a^2b^2=1$ and $a^2(\lambda+\frac32h^2)=2$.

First, we consider the Cauchy problem of equation \eqref{Eq-R3-phi} with initial condition $\phi(0)=0$ and $\phi'(0)>0$, which corresponds to rotationally symmetric solutions on $\mathbb{R}^3$.

\begin{proposition}
    Let $(\rean^3,g,H,f)$ be a $SO(3)$-invariant gradient shrinking generalized soliton with $d^*H=0$. 
    Then it's a Ricci soliton.
\end{proposition}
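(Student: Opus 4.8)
The plan is to show that the hypothesis $d^*H=0$ pins down the torsion to a constant, and that the resulting warped-product profile cannot close up a complete metric on $\mathbb{R}^3$ unless the torsion vanishes. First I would record the meaning of $d^*H=0$ for $H=h(r)\,dV_g$. Since $H$ is a top degree form on the three-manifold, $*H=h$, so
\begin{align*}
d^*H=-*d*H=-*(h'\,dr),
\end{align*}
a two-form proportional to $h'$. Hence $d^*H=0$ is equivalent to $h$ being constant. If $h\equiv 0$ then $H=0$ and there is nothing to prove, so I may assume $h$ is a nonzero constant; this is exactly the regime in which the computations preceding the proposition apply.

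With $h$ a nonzero constant, the third equation of \eqref{odes-soliton} together with the linear combination of the first two already reduce, after the scaling normalization recorded above, to the single ODE \eqref{Eq-R3-phi}, namely $\phi^2+(\phi')^2+2\phi\phi''=1$, subject to the regularity data $\phi(0)=0$ and $\phi'(0)=1$ (the value $\phi'(0)=1$ being forced by evaluating \eqref{Eq-R3-phi} at $r=0$). I would then extract a first integral by regarding $q:=(\phi')^2$ as a function of $\phi$, which turns \eqref{Eq-R3-phi} into $\frac{d}{d\phi}(\phi q)=1-\phi^2$ and yields
\begin{align*}
(\phi')^2=1-\frac{\phi^2}{3}+\frac{C}{\phi}
\end{align*}
for some constant $C$. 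Smoothness at the origin, where $\phi\to 0$ while $(\phi')^2\to 1$, forces $C=0$, so $(\phi')^2=1-\frac{\phi^2}{3}$ and hence $\phi(r)=\sqrt{3}\,\sin(r/\sqrt{3})$.

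This profile vanishes at the finite radius $r=\sqrt{3}\,\pi$, so the warped product is not a complete metric with $\phi>0$ on all of $(0,\infty)$: geometrically it closes up to the round $S^3$ rather than remaining $\mathbb{R}^3$. This contradicts the standing hypothesis that the soliton is defined on $\mathbb{R}^3$ (alternatively, once one observes that the underlying space is the compact $S^3$, Theorem \ref{noshrinkingsoliton} immediately forces $H=0$). Therefore $h=0$, i.e.\ $H=0$, and the soliton is a Ricci soliton. I expect the main obstacle to be these last two steps: recognizing that regularity at the origin pins the integration constant $C$, and that the sole admissible profile closes up, so that it is precisely the non-compact hypothesis that excludes nonzero torsion. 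The Hodge computation and the first integral are routine.
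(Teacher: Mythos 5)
Your proposal is correct and reaches the paper's contradiction --- the warping function must vanish at some finite radius, which is impossible for a warped-product metric on $\rean^3$ --- but it executes the central ODE step differently. The openings agree: $d^*H=0$ forces $h$ to be constant, and for $h\neq0$ everything reduces to the normalized equation \eqref{Eq-R3-phi} with $\phi(0)=0$, $\phi'(0)=1$. The paper then substitutes $u=\phi^{3/2}$, computes $u''=\frac34(u^{-1/3}-u)$, and runs a four-step qualitative oscillation argument on the sign of $u''$ to conclude that $u$, hence $\phi$, returns to zero in finite time. You instead integrate outright: the first integral $(\phi')^2=1-\frac{\phi^2}{3}+\frac{C}{\phi}$, with $C=0$ pinned by regularity at the origin, yields $\phi(r)=\sqrt{3}\sin(r/\sqrt{3})$, which vanishes at $r=\sqrt{3}\,\pi$. (The paper in fact records the same first integral, $3u^2+4p^2=9u^{2/3}$, which is $(\phi')^2=1-\phi^2/3$ in disguise, but then argues qualitatively rather than solving.) Two minor repairs to your write-up. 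Your derivation treats $(\phi')^2$ as a function of $\phi$, which is only legitimate where $\phi'\neq0$; to get past the critical point $\phi=\sqrt{3}$, observe instead that
\begin{align*}
\frac{\dif}{\dif r}\left[\phi\left((\phi')^2-1+\tfrac{\phi^2}{3}\right)\right]
=\phi'\left((\phi')^2+2\phi\phi''-1+\phi^2\right)=0
\end{align*}
by \eqref{Eq-R3-phi}, so the conservation law holds globally and the explicit solution is then forced by ODE uniqueness wherever $\phi>0$. Also, your parenthetical alternative ending --- invoking Theorem \ref{noshrinkingsoliton} on the closed-up $S^3$ --- is not free of charge: it would require the full soliton data $(g,H,f)$, not merely the profile $\phi$, to extend smoothly to the compact manifold, which you have not checked; your primary ending (finite-radius vanishing contradicts the $\rean^3$ hypothesis) needs no such verification and is exactly the paper's. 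As for what each route buys: yours is shorter and exhibits the obstruction explicitly, identifying the unique candidate profile with constant nonzero torsion as the round-sphere one, while the paper's sign analysis is more robust, since it requires no evaluation of the integration constant and rules out oscillating profiles for any admissible initial data.
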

\begin{proof}
    First of all, notice that in dimension three, $d^*H=0$ is equivalent to that $h$ is a constant.
    Assume $h\neq0$ and let $\phi(r)$ be a solution to \eqref{Eq-R3-phi}. By \cite{MR3469435}, the smoothness of $g$ implies $\phi(0)=0$, $\phi'(0)>0$. After a shift of $r$, we can assume $\phi(0)>0$, $\phi'(0)>0$. Set $u:=\phi^{\frac32}$, then        
    $u'=\frac32\phi^{\frac12}\phi'$ with initial condition 
    $u(0)>0$, $u'(0)=\varepsilon>0$.
    By direct computation,
    \begin{align*}
        u''&=\frac34\phi^{-\frac12}(\phi')^2+\frac32\phi^{\frac12}\phi''=\frac34\phi^{-\frac12}((\phi')^2+2\phi\phi'')\\
        &=\frac34\phi^{-\frac12}(1-\phi^2)\\
        &=\frac{3}{4}(u^{-\frac13}-u).
    \end{align*}
    Taking $p:=u'$, it becomes
    \begin{align*}
        u''=\frac{dp}{dr}=\frac{dp}{du}\frac{du}{dr}=p\frac{dp}{du}=\frac{3}{4}(u^{-\frac13}-u).
    \end{align*}
    or equivalently,
    \begin{align*}
        3u^2+4p^2=9u^{\frac23}.
    \end{align*}
    Next, we show that there exists
    $r>0$ such that $\phi=0$, which contradicts the definition of the warp product metric. Hence $h$ must vanish.

    Step 1: There exists $r_1>0$ such that $u(r_1)=1$ and $u'(r_1)>0$. If not, we have $u<1$ all the time. So we obtain $u''>0$, as a consequence we get $u'(r)>u'(0)=\varepsilon$ and then $u(r)>u(0)+\varepsilon r$. Thus $u(r)>1$ for sufficiently large $r$. It's a contradiction.

    Step 2: There exists $r_2>r_1$ such that $u(r_2)>1$ and $u'(r_2)=0$. If not, we have $u'>0$, hence $u$ is strictly increasing. Then there exists $\delta_1>0$ such that for any $r>\tilde{r}_1:=r_1+1$, we have $u(r)>1+\delta_1$ and $u''(r)<-\delta_1$. Thus $u'(r)<u'(\tilde{r}_1)-\delta_1(r-\tilde{r}_1)$. This leads to a contradiction for sufficiently large $r$.

    Step 3: There exists $r_3>r_2$ such that $u(r_3)=1$ and $u'(r_3)<0$. If not, we have $u>1$, and then $u''<0$.
    So there exists $\delta_2>0$ such that for any $r>\tilde{r}_2:=r_2+1$, we have $u'(r)<-\delta_2$. Thus $u(r)<u(\tilde{r}_2)-\delta_2(r-\tilde{r}_2)$.
    This leads to a contradiction for sufficiently large $r$.

    Step 4: There exists $r_4>r_3$ such that $u(r_4)=0$, $u'(r_4)=0$. If not, we have $u'<0$, hence $u$ is strictly decreasing. Then there exists $\delta_3>0$ such that for any $r>\tilde{r}_3:=r_3+1$, $u(r)<1-\delta_3$, $u''(r)>\delta_3$. Thus $u'(r)>u'(\tilde{r}_3)+\delta_3(r-\tilde{r}_3)$, which provides a contradiction when $r$ is large enough.
\end{proof}

Then we provide an example of non-trivial (i.e., $H\neq0$) shrinking generalized soliton on cylinder, which corresponds to the Cauchy problem of equation \eqref{Eq-R3-phi} with initial condition $\phi(0)>0$.

\begin{example}\label{eg-S2R}
    There exists non-trivial shrinking generalized soliton on $S^2\times\rean$.
\end{example}
\begin{proof}
    It is easy to check that $\lambda=\frac12$, $\phi=1$, $h=1$, $f=\frac12r^2$ is a solution to \eqref{odes-soliton} with initial condition $\phi(0)>0$.
\end{proof}

\subsection{Non-trivial singularity model}
The singularity models of the Ricci flow played a crucial role in Perelman's resolution of the Poincar{\'e} conjecture\cite{math/0211159}. For the generalized Ricci flow, due to the lack of monotonicity of the generalized $\mathcal{W}$ functional, it is not even known in general whether it collapses or not. Streets and Garcia-Fernandez\cite{MR4284898} introduced a condition called admitting torsion-bounding function and proved that under this assumption, the generalized Ricci flow are non-collapsing. More precisely, for a solution to the generalized Ricci flow $(g(t),H(t))$, $\psi$ is a torsion-bounding function if $\psi\geq0$ and satisfies
\begin{align*}
    \square\psi\leqs-|H|^2.
\end{align*}
Streets\cite{MR4348696} proved that in the context of the  Ricci-Yang-Mills flow on $S^2\times T^k$, which is a special symmetric reduction of generalized Ricci flow, the torsion-bounding function always exists, ensuring that the flow is non-collapsing, and in such cases, generalized singularity models are trivial (i.e, $H=0$).

We point out that the soliton constructed in Example \ref{eg-S2R} will occur naturally as a singularity model of compact generalized Ricci flow and prove that it can not admit any torsion-bounding function.

We analyze the generalized Ricci flow on $(S^2\times S^1,g_{S^2}+f^2dr^2)$ with initial data
\begin{align*}
    \Ric_{g_{S^2}(0)}=\frac12g_{S^2}(0), H(0)=h_0 dV_{g_0}, \nabla f_0=\nabla h_0=0.
\end{align*}
Then $g_{S^2}(t)=\lambda(t)g_{S^2}(0)$, $H(t)=h(t)dV_{g(t)}$, $\nabla\lambda(t)=\nabla f(t)=\nabla h(t)=0$. Hence the flow equation becomes an ODE system:
\begin{align*}
	&\lambda'=-1+\lambda h^2,\\
	&h'=\frac{h}{\lambda}-\frac32h^3,\\
	&f'=\frac12h^2f,\\
	&\lambda(0)=1,\ h(0)=h_0,\ f(0)=f_0.
\end{align*}
\begin{proposition}\label{eg-S2S1}
	Under the settings above, we have
	\begin{enumerate}
		\item If $h_0=0$, then the solution is
  \begin{align*}
      (\lambda(t),h^2(t),f(t))=(1-t,0,f_0)
      ,\quad
      t\in [0,1);
  \end{align*}
		\item If $h_0^2=\frac12$, then the solution is
  \begin{align*}
      (\lambda(t),h^2(t),f(t))=\left(1-\frac12t,\frac{1}{2-t},
      \frac{f_0}{\sqrt{1-\frac12t}}\right)
      ,\quad
      t\in [0,2);
  \end{align*}
		\item If $h_0\neq0$, then the flow exists for $t\in [0,T)$, $T<\infty$. Moreover, for the blowup sequence $(g_i(t),H_i(t)$
we have $(S^2\times S^1,g_i(t),H_i(t))\to (S^2\times\rean,g_\infty(t),H_\infty(t))$, where
\begin{align*}
    g_\infty(0)=g_{S^2}(0)+dr^2
    ,\quad
    h_\infty^2(0)=\frac12.
\end{align*}
In fact, $(S^2\times\rean,g_\infty,H_\infty)$ is exactly the shrinking generalized soliton constructed in Proposition \ref{eg-S2R} up to a scaling.
	\end{enumerate}
\end{proposition}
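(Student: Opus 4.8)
The plan is to reduce the whole statement to the decoupled planar system for $(\lambda,h^2)$ — note that the $f$-equation $f'=\tfrac12 h^2 f$ does not feed back into the equations for $\lambda$ and $h$ — solve that system, and then blow up at the singular time. For parts (1) and (2) I would simply substitute the proposed formulae into the ODE system and verify each line, appealing to uniqueness for the initial value problem. For (2), say, one checks directly that $\lambda h^2\equiv\tfrac12$, whence $\lambda'=-\tfrac12$; that $\tfrac{d}{dt}h^2=\tfrac{1}{(2-t)^2}$ agrees with $\tfrac{2h^2}{\lambda}-3h^4$; and that $f'/f=\tfrac12 h^2$ integrates to the stated power of $(1-t/2)$. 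Part (1) is immediate since $\{h=0\}$ is preserved.

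For part (3) the key device is the quantity $k:=\lambda h^2$. First I record that $h$ never vanishes: from $h'=h\big(\tfrac1\lambda-\tfrac32 h^2\big)$ we get $h(t)=h_0\exp\!\big(\int_0^t(\tfrac1\lambda-\tfrac32 h^2)\big)$, so $h_0\neq0$ gives $h^2>0$ throughout. Then
\[
k'=\lambda' h^2+2\lambda h h'=(-1+\lambda h^2)h^2+2\lambda h\big(\tfrac{h}{\lambda}-\tfrac32 h^3\big)=h^2(1-2k),
\]
so $k\equiv\tfrac12$ is a fixed point that, since $h^2>0$, is globally attracting along the flow: $k$ is monotone (increasing if $k_0\le\tfrac12$, decreasing if $k_0>\tfrac12$) and $k(t)\to\tfrac12$. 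Finite-time existence follows at once: once $k<\tfrac34$ we have $\lambda'=-1+k<-\tfrac14$, so $\lambda$ reaches $0$ at a finite time $T$, while $h^2=k/\lambda\to\infty$. Thus the singularity is the collapse of the $S^2$ factor with $\lambda h^2\to\tfrac12$.

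Next I perform the parabolic rescaling. Pick $t_i\uparrow T$ and scaling factors $Q_i:=\lambda(t_i)^{-1}\to\infty$, and set $g_i(s):=Q_i\,g(t_i+Q_i^{-1}s)$ together with the scaling that preserves the generalized Ricci flow, $H_i(s):=Q_i\,H(t_i+Q_i^{-1}s)$. Writing $H=h\,dV_g$ on the $3$-manifold and using $dV_{Q g}=Q^{3/2}dV_g$ yields the density scaling $\bar h^2=Q^{-1}h^2$, so the rescaled data on the $S^2$ factor are $\bar\lambda_i(0)=Q_i\lambda(t_i)=1$ and $\bar h_i^2(0)=Q_i^{-1}h^2(t_i)=k(t_i)\to\tfrac12$. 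A direct check shows the planar system is invariant under this parabolic rescaling, so $(\bar\lambda_i,\bar h_i^2)$ solves the same ODEs with initial data converging to $(1,\tfrac12)$; by continuous dependence it converges to the solution through $(1,\tfrac12)$, which is exactly the soliton of case (2), i.e. of Proposition \ref{eg-S2R}. Finally, in the rescaled metric the $S^1$ factor has squared-length coefficient $Q_i f(t_i)^2\ge Q_i f_0^2\to\infty$ (as $f$ is increasing), so the circle direction unrolls and the pointed limit is $S^2\times\rean$, giving $g_\infty(0)=g_{S^2}(0)+dr^2$ and $h_\infty^2(0)=\tfrac12$.

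The main obstacle is the geometric content of this last step rather than the ODE analysis: promoting the convergence of the reduced quantities to a genuine pointed Cheeger–Gromov convergence of the generalized Ricci flows $(S^2\times S^1,g_i,H_i)$, including the degeneration $S^1\to\rean$ and the convergence of the closed $3$-forms $H_i$. Concretely one must choose basepoints on the $S^1$ factor, verify uniform curvature and injectivity-radius bounds on the rescaled solutions — immediate here, since the geometry is a product of a round $S^2$ of fixed size with a flat, lengthening circle — and check that the limiting $H_\infty=h_\infty\,dV_{g_\infty}$ is closed and solves the limiting flow. The self-similar structure of the limit then identifies it with the soliton of Proposition \ref{eg-S2R}.
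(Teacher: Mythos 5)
Your overall strategy coincides with the paper's: the same key quantity $k=\lambda h^2$ with $k'=h^2(1-2k)$, the same rescaling computation $\bar h_i^2(0)=\lambda(t_i)h^2(t_i)\to\tfrac12$, and your use of $f'=\tfrac12h^2f>0$ (so $Q_if(t_i)^2\to\infty$) is a legitimate shortcut replacing the paper's conserved quantity $\lambda h f=h_0f_0$ for unrolling the $S^1$ factor. However, part (3) has a genuine gap at its core. You assert that $k\equiv\tfrac12$ is ``globally attracting'' and that $k(t)\to\tfrac12$, and only then deduce finite-time existence from ``once $k<\tfrac34$ we have $\lambda'<-\tfrac14$.'' Monotonicity of $k$ gives convergence to \emph{some} limit $k_T$; identifying $k_T=\tfrac12$ requires $\int_0^T h^2\,dt=\infty$, which one can only establish after knowing $T<\infty$, $\lambda\to0$, and a rate $\lambda(t)\leqslant C(T-t)$. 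So your argument is circular: the attraction claim is used to produce the singularity, while the singularity (with a rate) is what proves the attraction.

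The gap is substantive precisely in the regime $h_0^2\geqslant1$, i.e.\ $k_0\geqslant1$. There $\lambda'=-1+k\geqslant0$ initially, so $\lambda$ is non-decreasing; a priori $k$ could remain $\geqslant1$ forever, $\lambda$ could grow, and the flow could exist for all time with $k\to k_\infty\geqslant1$ --- in which case your trigger ``once $k<\tfrac34$'' never fires. Ruling this out is exactly the content of the paper's Case 2: assuming $T=\infty$ forces $k\geqslant1$, hence $h^2\geqslant\lambda^{-1}$; monotone convergence of $u=k-\tfrac12\geqslant\tfrac12$ gives $2\int_0^\infty h^2\,dt=\log u(0)-\log u_\infty<\infty$; but $\lambda'\leqslant h_0^2-1$ allows at most linear growth of $\lambda$, so $\int_0^\infty\lambda^{-1}\,dt=\infty$, a contradiction. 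Only after this (plus the rate argument near $T$) does $k\to\tfrac12$ follow. For $h_0^2<1$ your sketch repairs easily: there $k\leqslant\max(k_0,\tfrac12)<1$, so $\lambda'$ is bounded above by a negative constant, hence $T<\infty$, $\lambda(t)\leqslant T-t$ (as $\lambda'\geqslant-1$ and $\lambda(T)=0$), $h^2\geqslant\min(k_0,\tfrac12)/\lambda$ is non-integrable, and then $k\to\tfrac12$. But as written the case $h_0^2\geqslant1$ is missing, and it is the hard case. Parts (1), (2), and the geometric convergence discussion are fine and match the paper.
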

\begin{remark}
Here the blowup sequence is defined by
  \begin{align*}
g_i(t):=\lambda(t_i)^{-1}g(t_i+\lambda(t_i)t)
,\quad
H_i(t):=\lambda(t_i)^{-1}H(t_i+\lambda(t_i)t)
,\quad
t_i\to T^-.
\end{align*}
\end{remark}
\begin{proof}
	The first and second statement follow straightforward computation. For the third one, we compute
	\begin{align*}
		&(\lambda h^2)'=(-1+\lambda h^2)h^2+2\lambda h(\frac{h}{\lambda}-\frac32h^3)=h^2-2\lambda h^4,\\
		&(\lambda h f)'=(-1+\lambda h^2)h f+\lambda f(\frac{h}{\lambda}-\frac32 h^3)+\lambda h(\frac12 h^2f)=0.
	\end{align*}
	Firstly we will show that $T<\infty$ and $\lambda h^2(t)\to\frac12$ as $t\to T^-$. It's divided into two cases.
	
	\textbf{Case 1:} $0<h_0^2<\frac12$. Taking $u:=\frac12-\lambda h^2$, we have
 $u'=-2h^2u$, $u(0)>0$. 
    Assume the maximum existence time of ODEs is $T\leqslant\infty$, then
    \begin{align*}
        -2\int_0^t h^2(s)ds>-\infty
        ,\quad
        t<T.
    \end{align*}
  Hence $u>0$ and we can consider $\log u$.
  Notice that
  $(\log u)'=-2h^2\leqslant0$, which provides that $u$ is decreasing and $\lambda h^2$ is increasing. 
  Then we obtain $T<2$ since $\lambda'=-\frac12-u<-\frac12$ and $\lambda>0$ by definition.
  Notice that
  \begin{align*}
      0<h_0^2\leqslant\lambda h^2<\frac12
      ,\quad
      u(t)\to u_T\geqslant0
      ,\quad
      t\to T^-.
  \end{align*}
  If $u_T>0$, then
	\begin{align*}
		\log u_T-\log u(0)&=-2\int_0^T h^2(s)ds\leqslant-2\int^T_0\frac{h_0^2}{\lambda(s)}ds=-2\int^T_0\frac{h_0^2}{(s-T)\lambda'(\xi_s)}ds\\
		&\leqslant-2\int^T_0\frac{h_0^2}{T-s}ds=-\infty.
	\end{align*}
	 So we have $u_T=0$, namely, $\lambda h^2\to\frac12$.
	 
	 \textbf{Case 2:} $h_0^2>\frac12$. Taking $u=\lambda h^2-\frac12$, we have $u'=-2h^2u$. For the same reason, we have $u>0$ in $[0,T)$, $u$ and $\lambda h^2$ are decreasing.
  We prove by contradiction.
  
  Assume $T=+\infty$. We have $\lambda h^2\geqslant1$. This is because that
  if there exists $t_0>0$ such that $\lambda h^2(t_0)<1$, then
  \begin{align*}
      \lambda'=\lambda h^2-1\leqslant \lambda h^2(t_0)-1<0
      ,\quad
      t>t_0
  \end{align*}
  which provides $T<\infty$.
 By the monotonicity of $u$, we have
 $u(t)\to u_\infty\geqslant\frac12$ as $t\to\infty$
 and
 \begin{align*}
     2\int_0^\infty h^2(s)ds=\log u(0)-\log u_\infty<\infty.
 \end{align*}
 Then
 \begin{align*}
     \int_0^\infty \lambda^{-1}ds\leqslant\int_0^\infty h^2ds<\infty.
 \end{align*}
We will prove that this cannot happen under our assumption.
Notice $\lambda'=\lambda h^2-1\leqslant h_0^2-1=:a_0$.
Set $a_0=h_0^2-1$.

If $a_0\leqslant0$, we have $\lambda\leqslant1$ and then $\int_0^\infty\lambda^{-1}ds\geqslant+\infty$. 

If $a_0>0$, we have
	 \begin{align*}
	 	\infty>\int_0^\infty\lambda^{-1}dt\geqslant\int_0^\infty\frac{dt}{1+a_0t}=\left.\frac{1}{a_0}\log(1+a_0t)\right|_0^\infty=+\infty.
	 \end{align*}
	 Thus we have proven that $T<\infty$.
  Since $\lambda(t)\to 0$ as $t\to T^-$, there exists $\varepsilon_0>0$ such that $-1<\lambda'(t)\leqslant-\varepsilon_0$ in $[T-\varepsilon_0,T)$.
  Notice that $\frac12<\lambda h^2\leqslant h_0^2$ and the argument is similar to Case 1.
  
  The blowup sequence at $t=0$ is
  \begin{align*}
    g_i(0)=\lambda(t_i)^{-1}g(t_i)=g_{S^2}(0)+\lambda(t_i)^{-1}f^2(t_i)dr^2
  \end{align*}
We have	$\lambda^{-1}f^2=h_0^2f_0^2\lambda^{-3}h^{-2}$ since $\lambda h f=h_0f_0$.
Notice $\lambda^{-1}h^{-2}\to 2$, so $\lambda^{-1}f^2\to +\infty$. 
Then we obtain the smooth convergence $(S^2\times S^1,g_i(0))\to (S^2\times \rean, g_{\infty}(0))$ with $g_\infty(0)=g_{S^2(0)}+dr^2$. 
By direct computation,
\begin{align*}
h_i(0)dV_{g_i(0)}=H_i(0)=\lambda(t_i)^{-1}H(t_i)=\lambda(t_i)^{-1}h(t_i)dV_{g(t_i)}=\lambda(t_i)^{\frac12}h(t_i)dV_{g_i(0)}
\end{align*}
and then
\begin{align*}
    h_{\infty}^2(0)=\lim_{i\to\infty}h_i^2(0)
    =\lim_{i\to\infty}\lambda(t_i)h^2(t_i)
    =\frac12>0,
\end{align*}
which means that $(g_{\infty}(0),H_{\infty}(0))$ is non-trivial.
It is easy to check that the singularity model is the shrinking generalized soliton constructed in Example \ref{eg-S2R}.
\end{proof}

\begin{corollary}
	There exists non-trivial generalized singularity models.
\end{corollary}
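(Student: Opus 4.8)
The plan is to read the corollary directly off Proposition~\ref{eg-S2S1}(3), which has already carried out all of the analytic work. First I would fix an initial torsion $h_0 \neq 0$ (taking $h_0^2 \neq \tfrac12$ so as to land in the genuinely non-self-similar regime) and run the $SO(3)$-invariant generalized Ricci flow on $S^2 \times S^1$ with the prescribed rotationally symmetric data. By Proposition~\ref{eg-S2S1}(3) this flow develops a finite-time singularity at some $T < \infty$, with $\lambda(t) \to 0$ as $t \to T^-$. In particular the parabolic rescaling factor $\lambda(t_i)^{-1}$ appearing in the blowup sequence of the Remark diverges, which is precisely what is needed for the rescalings to resolve, rather than wash out, the forming singularity.

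Second, I would invoke the convergence statement of the same proposition: the blowup sequence $(g_i(t), H_i(t))$ converges smoothly to $(S^2 \times \rean, g_\infty(t), H_\infty(t))$, and this limit coincides, up to scaling, with the shrinking generalized soliton of Example~\ref{eg-S2R}. The decisive point is the computation there giving $h_\infty^2(0) = \lim_{i} \lambda(t_i) h^2(t_i) = \tfrac12 > 0$, so that $H_\infty \neq 0$ and the limit is non-trivial. Since a smooth pointed limit of an admissible blowup sequence of a finite-time singular flow is by definition a generalized singularity model, the soliton $(S^2 \times \rean, g_\infty, H_\infty)$ supplies the required example, and the corollary follows.

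There is essentially no obstacle remaining at this stage; the corollary is a repackaging of Proposition~\ref{eg-S2S1}. The only point deserving a line of care is to confirm that the normalization in the Remark's blowup sequence---rescaling both $g$ and $H$ by the single factor $\lambda(t_i)^{-1}$ while reparametrizing time as $t_i + \lambda(t_i)t$---is exactly the parabolic scaling under which the generalized Ricci flow is invariant, so that the limit is itself a solution of the flow and hence a legitimate singularity model rather than a scaling artifact. This invariance rests only on the fact that both $\Ric$ and $H^2$ are unchanged when $g$ and $H$ are scaled by a common constant (the two factors of $g^{-1}$ in $H^2$ cancel the squared scaling of $H$), which is the same scale-invariance already used in the soliton-to-flow construction at the start of Section~\ref{sec-compactshrinking}; no new input is required.
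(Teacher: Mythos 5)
Your proposal is correct and takes essentially the same route as the paper, which states the corollary without a separate proof precisely because it is an immediate consequence of Proposition~\ref{eg-S2S1}(3): the blowup sequence of the $S^2\times S^1$ flow with $h_0\neq 0$ converges to the soliton of Example~\ref{eg-S2R}, and the computation $h_\infty^2(0)=\lim_i\lambda(t_i)h^2(t_i)=\tfrac12>0$ shows the limit is non-trivial. Your closing check that the common rescaling of $g$ and $H$ by $\lambda(t_i)^{-1}$ with time reparametrized by $t_i+\lambda(t_i)t$ preserves the generalized Ricci flow equations is a sound (if implicit in the paper) point of care.
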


The example constructed above demonstrates that the existence of a torsion-bounding function is not a necessary condition for the non-collapsing of the generalized flow.

\begin{corollary}
	There exists non-collapsing generalized Ricci flow which does not admit any torsion-bounding functions.
\end{corollary}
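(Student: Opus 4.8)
The plan is to take the generalized Ricci flow of Proposition \ref{eg-S2S1}(3) on $S^2\times S^1$ with $h_0\neq0$ as the desired example. First I would record that this flow is non-collapsing: by Proposition \ref{eg-S2S1} it develops a singularity at a finite time $T$, and its rescaled limit is precisely the non-trivial shrinking soliton on $S^2\times\rean$ from Example \ref{eg-S2R}, with $h_\infty^2(0)=\frac12>0$. Since this limit is a smooth, non-flat product with bounded geometry, the flow cannot collapse at the singularity; concretely, at the curvature scale $\rho(t)\sim\sqrt{\lambda(t)}$ a metric ball of radius $\rho$ contains a region modelled on $S^2(\sqrt\lambda)\times(\text{interval})$, so that $\vol B_\rho\gtrsim\lambda\cdot\sqrt\lambda\sim\rho^3$, i.e. the flow is $\kappa$-non-collapsed at the relevant scales. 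Thus the non-collapsing assertion is immediate from the blow-up analysis already carried out.

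The main task is to show that this flow admits no torsion-bounding function, and I would argue by contradiction. Suppose $\psi\geqs0$ satisfies $\square\psi\leqs-|H|^2$ on $M\times[0,T)$. Fix $T'\in(0,T)$ and let $u_t$ solve the conjugate heat equation $\square^*u=0$ on $[0,T']$ with terminal data $u_{T'}\equiv\vol(g_{T'})^{-1}$; running this backward parabolic problem keeps $u_t>0$ and preserves the total mass, so $\int_M u_t\,dV_{g_t}\equiv1$ on $[0,T']$. Pairing $\psi$ with $u$ in the conjugation identity \eqref{eq-conjugation-relationship} and using $\square^*u=0$ gives
\begin{align*}
\ddt\int_M\psi u\,dV=\int_M(\square\psi)\,u\,dV\leqs-\int_M|H|^2u\,dV.
\end{align*}
Because the solution is spatially homogeneous, $|H|^2=h^2(t)$ is constant on $M$, and the unit-mass normalization turns the right-hand side into exactly $-h^2(t)$, independently of the (unknown) profiles of $\psi$ and $u$.

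Integrating over $[0,T']$ and using $\psi\geqs0$, $u>0$ then yields
\begin{align*}
\int_0^{T'}h^2(s)\,ds\leqs\int_M\psi u\,dV\Big|_{t=0}\leqs\sup_M\psi(\cdot,0)=:P,
\end{align*}
where the last bound uses $\int_M u\,dV\big|_{t=0}=1$, and $P<\infty$ since $\psi(\cdot,0)$ is continuous on the compact manifold $M$. Crucially $P$ does not depend on $T'$, so letting $T'\to T$ gives $\int_0^T h^2(s)\,ds\leqs P<\infty$. This contradicts $\int_0^T h^2(s)\,ds=+\infty$, which is contained in the proof of Proposition \ref{eg-S2S1}: near $T$ one has $\lambda h^2\to\frac12$ and $\lambda(t)\sim\frac12(T-t)$, whence $h^2(t)\sim(T-t)^{-1}$ and the integral diverges logarithmically (the bound $\int_0^T h_0^2/\lambda\geqs\int_0^T h_0^2/(T-s)=+\infty$ appearing there gives this at once). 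Hence no torsion-bounding function exists.

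I expect the main obstacle to be the reduction step rather than the divergence estimate: one must ensure the argument does not tacitly assume $\psi$ is rotationally or translationally invariant. The two features that make it work for an arbitrary $\psi\geqs0$ are the spatial constancy of $|H|^2$ together with the conserved unit mass $\int_M u\,dV\equiv1$, which collapse $\int_M|H|^2u\,dV$ to $h^2(t)$, and the uniform bound $\int_M\psi u\,dV|_{0}\leqs\sup_M\psi(\cdot,0)$, which is independent of the terminal time $T'$ and so survives the limit $T'\to T$ into the singular time. The non-collapsing half is comparatively routine, being read off directly from the non-trivial blow-up limit identified in Proposition \ref{eg-S2S1}.
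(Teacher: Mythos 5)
Your proof is correct, but the key step is carried out by a different mechanism than the paper's. The paper's own proof is a direct pointwise maximum-principle argument: since the solution of Proposition \ref{eg-S2S1} is homogeneous, the torsion-bounding inequality reads $\square\psi\leqs-|H|^2=-6h^2(t)$ with spatially constant right-hand side, so comparison with the ODE solution gives $\psi(x,t)\leqs\max_M\psi(\cdot,0)-6\int_0^t h^2(s)\,ds$, which turns negative before the singular time because $\int_0^T h^2(s)\,ds=\infty$, contradicting $\psi\geqs0$. You instead pair $\psi$ against a backward, positive, mass-normalized solution of the conjugate heat equation via \eqref{eq-conjugation-relationship}; this is precisely the mechanism the paper reserves for Proposition \ref{prop-soliton_cannot_torbounding}, where a pointwise maximum principle is not directly usable because $|H|^2$ is not spatially constant for a general compact soliton and one must weight it by a conjugate heat solution. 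Both routes hinge on the same two facts: spatial constancy of $|H|^2$ for this symmetric solution (you use it to collapse $\int_M|H|^2u\,dV$ to a function of $t$ alone; the paper uses it to reduce to an ODE comparison), and the divergence $\int_0^T h^2(s)\,ds=\infty$ extracted from the proof of Proposition \ref{eg-S2S1}. The paper's route is more elementary, needing no existence, positivity, or mass-conservation theory for the backward conjugate heat problem; yours carries that extra machinery but follows the pattern that generalizes to arbitrary (non-homogeneous) compact solitons. Two cosmetic remarks: in the paper's normalization $|H|^2=6h^2$ rather than $h^2$, which is harmless since only the divergence of the time integral matters; and your treatment of the non-collapsing half — reading it off the non-trivial blow-up limit at the curvature scale — is consistent with the paper, whose proof of this corollary likewise addresses only the torsion-bounding half explicitly.
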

\begin{proof}
	For the case in Proposition \ref{eg-S2S1}, assume there exists a torsion-bounding function $\psi\geq0$ such that
 \begin{align*}
     \left(\pdt-\Delta\right)\psi\leqslant -|H|^2=-6h^2
 \end{align*}
 By the maximum principle, 
 \begin{align*}
     \psi(x,t)\leqslant \psi_0-6\int_0^t h^2(s)ds
     ,\quad
     \psi_0:=\max_{x}\psi(x,0)
 \end{align*}
 However, there exists $t_0<T$ such that
 \begin{align*}
     \psi(x,t)\leqslant\psi_0-6\int_0^{t_0}h^2(s)ds<0
 \end{align*}
 since $\int_0^Th^2(s)ds=\infty$.
 It is a contradiction.
\end{proof}

In fact, we can show that for any shrinking generalized solitons with $H\neq0$, if exists, will never admit any torsion-bounding functions.

\begin{proposition}\label{prop-soliton_cannot_torbounding}
    A compact shrinking generalized soliton admits a torsion-bounding function if and only if it is a Ricci soliton.
\end{proposition}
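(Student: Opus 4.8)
The plan is to exploit the self-similar generalized Ricci flow $(g_t,H_t)=((1-t)\phi_t^*g_0,(1-t)\phi_t^*H_0)$ induced by the soliton (taking $\lambda=1$), which is defined on $[0,1)$ and becomes singular as $t\to1^-$, and to play the torsion-bounding inequality off against a suitable solution of the conjugate heat equation. The direction ``Ricci soliton $\Rightarrow$ admits a torsion-bounding function'' is immediate: if $H\equiv0$ then $\psi\equiv0$ satisfies $\square\psi=0=-|H|^2$. So the content is the converse, and I would argue by contradiction, assuming $\psi\geq0$ with $\square\psi\leq-|H|^2$ along the induced flow while $H_0\not\equiv0$.

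The heart of the argument is the construction of a \emph{self-similar} conjugate heat solution, i.e.\ a positive $u_t$ with $u_t\,dV_{g_t}=\phi_t^*(\rho_0\,dV_{g_0})$ for a fixed density $\rho_0>0$. First I would record the scaling identities $|H_t|^2=(1-t)^{-1}\phi_t^*|H_0|^2$, $R_{g_t}=(1-t)^{-1}\phi_t^*R_{g_0}$ and $dV_{g_t}=(1-t)^{n/2}\phi_t^*dV_{g_0}$, together with the naturality $\Delta_{g_t}(\phi_t^*v)=(1-t)^{-1}\phi_t^*(\Delta_{g_0}v)$ coming from the fact that $\phi_t\colon(M,\phi_t^*g_0)\to(M,g_0)$ is an isometry. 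Writing $u_t=(1-t)^{-n/2}\phi_t^*\rho_0$, a direct computation then collapses $\square^*u_t$ to $(1-t)^{-n/2-1}\phi_t^*(P\rho_0)$, where $P:=-\Delta_{g_0}-X_0\cdot\nabla+(R_{g_0}-\tfrac14|H_0|^2-\tfrac n2)$ is an elliptic drift--Schr\"odinger operator on the compact soliton. By the Krein--Rutman theorem $P$ admits a positive principal eigenfunction $\rho_0>0$ with a real eigenvalue $\kappa$; the function $\hat u_t:=(1-t)^{-\kappa}u_t$ then solves $\square^*\hat u=0$, so its total mass is constant in $t$, while $\int_M\hat u_t\,dV_{g_t}=(1-t)^{-\kappa}\int_M\rho_0\,dV_{g_0}$. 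This forces $\kappa=0$, so $u_t$ itself is the desired conjugate heat solution with transported measure. (In the gradient case this simply recovers $\rho_0=c\,e^{-f_0}$ from Proposition \ref{solitonheateqn}.)

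With $u$ in hand the contradiction is quick. Using the conjugation formula \eqref{eq-conjugation-relationship} and $\square^*u=0$, one gets $\ddt\int_M\psi u\,dV=\int_M\square\psi\cdot u\,dV\leq-\int_M|H|^2u\,dV\leq0$; since $\psi\geq0$ and $u>0$, the quantity $\int_M\psi u\,dV$ is nonnegative, whence $\int_0^1\!\!\int_M|H|^2u\,dV\,dt\leq\int_M\psi u\,dV|_{t=0}<\infty$. On the other hand, by the transported-measure property and the torsion scaling, $\int_M|H_t|^2u_t\,dV_{g_t}=(1-t)^{-1}\int_M|H_0|^2\rho_0\,dV_{g_0}$, and the constant $\int_M|H_0|^2\rho_0\,dV_{g_0}$ is strictly positive because $\rho_0>0$ and $H_0\not\equiv0$. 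Hence $\int_0^1(1-t)^{-1}\,dt=\infty$ and the space-time torsion integral diverges, contradicting the finite bound above. Therefore $H_0\equiv0$, i.e.\ the soliton is a Ricci soliton. I expect the construction of the self-similar conjugate heat solution, and in particular verifying that the principal eigenvalue vanishes, to be the main obstacle; everything downstream is a short computation built on the scaling of the soliton flow, essentially the same maximum/integral principle already used in the preceding corollary.
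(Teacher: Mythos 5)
Your proposal is correct, and its overall architecture is the same as the paper's: both construct a positive, self-similar solution $u_t=(1-t)^{-n/2}\phi_t^*\rho_0$ of the conjugate heat equation along the soliton-induced flow, then pair it with the torsion-bounding function through \eqref{eq-conjugation-relationship} to obtain $\int_M\psi_t u_t\,dV\leq \int_M\psi_0u_0\,dV+a_0\log(1-t)$, which is the same logarithmic contradiction when $H_0\not\equiv 0$. The one step where you genuinely diverge is the existence of the positive density $\rho_0$. The paper uses the traced soliton identity $R-\frac14|H|^2-\frac{n}{2}=-\diver X$ to recognize the relevant equation as the divergence-form equation $\diver(\nabla\rho_0+\rho_0X)=0$, and then cites Gauduchon's lemma (Lemma \ref{lem-divergencepde}, proved via the Fredholm alternative together with the strong maximum principle applied to the adjoint operator $\Delta-\langle X,\nabla\cdot\rangle$) to get a strictly positive kernel element. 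You instead invoke Krein--Rutman to produce a positive principal eigenfunction with $P\rho_0=\kappa\rho_0$, $\kappa$ real, and then force $\kappa=0$ by noting that $(1-t)^{-\kappa}u_t$ is an exact conjugate heat solution, whose total mass is conserved by \eqref{eq-conjugation-relationship} yet visibly scales like $(1-t)^{-\kappa}$. Both ingredients are standard and your normalization trick is sound; it is in fact equivalent to the paper's route, since integrating $P\rho_0=\kappa\rho_0$ over $M$ and using the traced soliton identity gives $\kappa=0$ directly. What the paper's formulation buys is that positivity and uniqueness (up to scale) of the density come packaged in one classical lemma; what yours buys is that you never need to rewrite $P$ as a pure divergence, since the flow itself certifies the vanishing of the principal eigenvalue.
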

\begin{proof}
    Assume $(M^n,g,H,X)$ is a shrinking generalized soliton on compact manifold $M^n$ with vector field $X$.
    Let $\phi_t$ be the family of diffeomorphism generated by $X_t:=\frac{1}{1-t}X$. 
    The solution generated by soliton is given by
    $g_t=(1-t)\phi_t^*g$ and $H_t=(1-t)\phi_t^*H$.
    We will show that there exists smooth functions $u>0$ and $\sigma(t)$ such that $u_t=\sigma(t)\phi_t^*u$ satisfies the conjugate heat equation $\square^*u_t=0$.
    By direct computation, we have
\begin{align*}
\pdt\left(\sigma(t)\phi_t^*u\right)=\phi_t^*\left(\sigma'u+\sigma\Li_{X_t}u\right)=\phi_t^*\left(\sigma'u+\frac{\sigma}{1-t}\Li_{X}u\right),
\end{align*}
and
\begin{align*}
\Delta_{g_t}\left(\sigma(t)\phi_t^*u\right)=\phi_t^*\left(\frac{\sigma}{1-t}\Delta_{g} u\right),  
\end{align*}
By definition, we get
\begin{align*}
    \left(R_t-\frac14|H_t|_{g_t}^2\right)\left(\sigma(t)\phi_t^*u\right)=\phi_t^*\left[\frac{\sigma}{1-t}\left(R_g-\frac14|H|_g^2\right)u\right].
\end{align*}
Combining above together, we get
    \begin{align*}
        \square^*u=&\left(-\pdt-\Delta_{g_t}+R_t-\frac14|H_t|_{g_t}^2\right)\left(\sigma(t)\phi_t^*u\right)\\
        =&\frac{\sigma}{1-t}\phi_t^*\left(-\Delta_gu-\Li_Xu+\left(R_g-\frac14|H|_g^2-\frac{\sigma'(1-t)}{\sigma}\right)u\right).
    \end{align*}
    If we take $\sigma(t)=(1-t)^{-n/2}$, then $\frac{\sigma'(1-t)}{\sigma}=\frac{n}{2}$. 
    By tracing the soliton equation, we have 
    \begin{align*}
R-\frac14|H|^2-\frac{n}{2}=-\diver X    
    \end{align*}

Hence the existence of $u_t$ as we claimed is equivalent to the solvability of the next elliptic equation.
    \begin{align*}
        \Delta_g u+\left<X,\nabla u\right>+\diver X \cdot u
        =\diver(\n u+uX)
        =0,
    \end{align*}
By Lemma \ref{lem-divergencepde} in Appendix, there exists a unique strictly positive solution $u$ up to a multiple constant.
And in this case $u_t=(1-t)^{-n/2}\phi_t^*u>0$.

Assume that we have a torsion-bounding function $\psi_t$.
Applying formula \eqref{eq-conjugation-relationship} and noticing $u_t$ is a positive solution to the conjugate heat equation, we have
    \begin{align*}
        \ddt\int_M \psi_t u_t dV=\int_M \square\psi_t\cdot u_tdV_t\leqs-\int_M|H_t|_{g_t}^2u_tdV_t=-\frac{1}{1-t}\int_M|H_0|_{g_0}^2u_0dV_0=:-\frac{a_0}{1-t}.
    \end{align*}
    If $H\neq0$, we have $a_0>0$. Then
    \begin{align*}
        0\leqs\int_M\psi_tu_t dV \leqs \int_M\psi_0 u_0 dV+a_0\log(1-t)\to -\infty \text{ as }t\to 1-.
    \end{align*}
    This leads to a contradiction when $t$ is close to $1$ sufficiently.
\end{proof}

\section{Solitons of pluriclosed flow}\label{sec-pluriclosedsoliton}
Given a one parameter family of Hermitian metric $\omega(t)$ on a complex manifold $M^{2n}$. We say $(M^{2n},J,\omega(t))$ is a pluriclosed flow if it satisfies
\begin{align*}
    \pdt\omega=-\rho^{1,1},
\end{align*}
where $\rho^{1,1}$ is the $(1,1)$ part of the Bismut Ricci curvature. Bismut connection is a natural choice of connections in complex geometry, which is the unique Hermitian connection compatible with the metric and having totally skew-symmetric torsion.

\begin{proposition}[\cite{MR3110582}, Proposition 6.3, Proposition 6.4]
    Given $(M^{2n},J,\omega(t))$ a pluriclosed flow, one has
    \begin{align*}
        \pdt g&=-\Ric_g+\frac14H^2-\frac12\Li_{\theta^\sharp}g,\\
        \pdt H&=-\frac12\Delta_d H-\frac12\Li_{\theta^\sharp}H,
    \end{align*}
    where $H=d^c\omega\in \Lambda^3(M)$,  $\theta=d^*\omega\circ J$ is the Lee form.
\end{proposition}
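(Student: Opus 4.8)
The plan is to convert the geometric evolution $\pdt\omega=-\rho^{1,1}$ into the stated flows of the Riemannian metric $g$ and the torsion three-form $H=d^c\omega$ by comparing the Bismut connection with the Levi-Civita connection and extracting the $(1,1)$-type information through the fixed complex structure $J$. The essential input is a pointwise formula expressing the Bismut--Ricci curvature in terms of the Levi-Civita Ricci tensor $\Ric$, the non-negative tensor $H^2$, and first-order torsion terms; everything else is bookkeeping with $J$.

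First I would use that the Bismut connection is $\n^B=\n+\tfrac12 g^{-1}H$, so that the standard curvature comparison produces a decomposition of the Bismut--Ricci tensor into a symmetric part, which is $\Ric-\tfrac14 H^2$, and an antisymmetric part, which is governed by $d^*H$. The trace of the torsion, encoded by the Lee form $\theta=d^*\omega\circ J$, is exactly what distinguishes the non-\kah setting. I would then extract the $(1,1)$-component $\rho^{1,1}$ via $J$ and the compatibility $\omega(\cdot,\cdot)=g(J\cdot,\cdot)$, identifying the trace and divergence terms that organize into a Lie derivative along the Lee vector field $\theta^{\sharp}$. The target identity is that $\rho^{1,1}$, converted to a symmetric tensor through $J$, equals $\Ric-\tfrac14 H^2+\tfrac12\Li_{\theta^{\sharp}}g$. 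Since $J$ is time-independent, differentiating $\omega(X,Y)=g(JX,Y)$ gives $\pdt\omega(X,Y)=\pdt g(JX,Y)$, so $\pdt g$ is recovered algebraically from $\pdt\omega=-\rho^{1,1}$, yielding the first claimed equation.

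For the evolution of $H$, I would differentiate $H=d^c\omega$ in time. Because $d^c$ depends only on the fixed $J$, it commutes with $\pdt$, so $\pdt H=d^c(\pdt\omega)=-d^c\rho^{1,1}$. The pluriclosed condition $dH=0$ (equivalently $\dd\db\omega=0$), combined with the decomposition above, then lets me rewrite $d^c\rho^{1,1}$ in terms of the Hodge Laplacian and the Lee form as $d^c\rho^{1,1}=\tfrac12\Delta_d H+\tfrac12\Li_{\theta^{\sharp}}H$, giving $\pdt H=-\tfrac12\Delta_d H-\tfrac12\Li_{\theta^{\sharp}}H$. Pluriclosedness is used both to keep $H$ closed along the flow and to discard the terms that would otherwise obstruct expressing $d^c\rho^{1,1}$ through $\Delta_d$.

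The main obstacle is the curvature comparison together with the bookkeeping needed to isolate the Lee form. Decomposing the Bismut--Ricci form into its $(1,1)$ and $(2,0)+(0,2)$ parts, and verifying that the first-order torsion contributions assemble \emph{precisely} into $\tfrac12\Li_{\theta^{\sharp}}g$ and $\tfrac12\Li_{\theta^{\sharp}}H$, requires the first Bianchi identity for $\n^B$, the relation $\theta^{\sharp}=(d^*\omega)\circ J$ between the Lee form and the torsion trace, and repeated use of $dH=0$. Keeping the signs and normalizations consistent with the conventions $H=d^c\omega$ and $\pdt\omega=-\rho^{1,1}$ is where essentially all of the care is needed; once those are fixed, the two evolution equations follow in parallel.
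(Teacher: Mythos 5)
You should first be aware that the paper itself contains no proof of this statement: it is quoted directly from Streets--Tian \cite{MR3110582}, Propositions 6.3 and 6.4, so your proposal can only be measured against that source. Your outline does follow the same overall strategy as the cited proof: compare the Bismut connection $\n^B=\n+\tfrac12 g^{-1}H$ with the Levi-Civita connection, trace the curvature comparison, use $dH=0$, recover $\pdt g$ algebraically from $\pdt\omega$ via the fixed $J$, and obtain $\pdt H = d^c(\pdt\omega)=-d^c\rho^{1,1}$ because $d^c$ depends only on $J$. The two pointwise identities you isolate as targets, namely that $\rho^{1,1}(J\cdot,\cdot)$ equals $\pm\left(\Ric-\tfrac14 H^2+\tfrac12\Li_{\theta^\sharp}g\right)$ (sign depending on your $J$-conversion convention) and that $d^c\rho^{1,1}=\tfrac12\Delta_d H+\tfrac12\Li_{\theta^\sharp}H$, are exactly the content of the cited propositions, and reducing the evolution equations to them is correct.

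There is, however, a genuine gap in the step you describe as producing the first identity. The ``standard curvature comparison'' decomposition you invoke --- symmetric part $\Ric-\tfrac14 H^2$, antisymmetric part governed by $d^*H$ --- is the decomposition of the Bismut--Ricci \emph{tensor} $\Ric^B(X,Y)=\sum_i R^B(e_i,X,Y,e_i)$, whereas $\rho^{1,1}$ is the $(1,1)$ part of the Bismut--Ricci \emph{form} $\rho(X,Y)=\tfrac12\sum_i R^B(X,Y,e_i,Je_i)$, a different contraction of the full curvature tensor. For a connection with torsion these two traces are not related by any algebraic manipulation with $J$: the pair symmetry $R^B(X,Y,Z,W)=R^B(Z,W,X,Y)$ fails by first-order terms in $\n H$, and it is precisely the $J$-trace of those terms --- nonvanishing exactly in the non-K\"ahler case --- that generates the Lee-form contributions $\tfrac12\Li_{\theta^\sharp}g$ and $\tfrac12\Li_{\theta^\sharp}H$. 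So ``extracting the $(1,1)$-component via $J$'' from the Ricci-tensor decomposition, taken literally, would not produce the stated formula; one must work with the full four-tensor comparison, apply the $J$-trace, and only then use $dH=0$ together with the Bianchi identity, which is where the actual work in \cite{MR3110582} lies. The same criticism applies to the second equation: $d^c\rho^{1,1}=\tfrac12\Delta_d H+\tfrac12\Li_{\theta^\sharp}H$ is asserted as a consequence of ``the decomposition above,'' but it does not follow formally from it and requires the same full-curvature computation. In short, the skeleton and all stated targets agree with the cited source, but the two identities that carry the entire content of the proposition are left unproven, and the one derivation you do sketch rests on a conflation of two different Ricci contractions.
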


Thus the pluriclosed flow is a special case of generalized Ricci flow module gauge transformation. Moreover, we show that under the gauge transformation, the heat equation will also behave well.

\begin{proposition}
    Given $(M^{2n},J,\omega(t))$ a pluriclosed flow. Let $\phi_t$ be the family of diffeomorphism generated by $X_t:=\theta_{2t}^{\sharp}$, $\hat{g}_t:=\phi_t^*g_{2t}$, $\hat{H}_t:=\phi_t^*H_{2t}$. Then $(M^{2n},\hat{g}_t,\hat{H}_t)$ solves the generalized Ricci flow. Moreover, given $f_t\in C^\infty(M)$, $h_t\in C^\infty(M)$ satisfying
    \begin{align*}
        \pdt f=\Delta^C_{g_t} f+h,
    \end{align*}
    Then for $\hat{f}_t:=\phi_t^* f_{2t}$, $\hat{h}_t:=\phi_t^* h_{2t}$, we have
    \begin{align*}
        \pdt \hat{f}=\Delta^{LC}_{\hat{g}_t} \hat{f}+2\hat{h},
    \end{align*}
    where $\Delta^C_g$ is the Chern Laplacian w.r.t $g$, and $\Delta^{LC}_{\hat{g}}$ is the Levi-Civita Laplacian w.r.t $\hat{g}$.
\end{proposition}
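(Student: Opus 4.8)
The plan is to differentiate the pulled-back tensors directly via the chain rule for the flow of a time-dependent vector field, and to exploit the naturality of all the geometric operators involved under pullback by diffeomorphisms. The crucial bookkeeping feature is the time rescaling $t\mapsto 2t$ together with the vector field $X_t=\theta_{2t}^{\sharp}$: differentiating $\hat g_t=\phi_t^*g_{2t}$ produces both a Lie-derivative term $\Li_{\theta^\sharp}g$ (from $\phi_t$ moving) and a factor of $2$ in front of $(\partial_t g)_{2t}$ (from evaluating at $2t$). The factor of $2$ converts the $\tfrac14 H^2$ and $-\tfrac12\Li_{\theta^\sharp}g$ of the pluriclosed evolution into the $\tfrac12 H^2$ and $-\Li_{\theta^\sharp}g$ of the generalized Ricci flow, and the Lie-derivative term from the gauge is precisely what cancels the doubled $-\Li_{\theta^\sharp}g$.

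Concretely, for the first assertion I would apply the standard identity $\pdt\phi_t^*\alpha_t=\phi_t^*\left(\Li_{X_t}\alpha_t+\partial_t\alpha_t\right)$ with $\alpha_t=g_{2t}$ and $\partial_t(g_{2t})=2(\partial_t g)_{2t}$. Substituting the pluriclosed evolution of $g$ gives
\begin{align*}
\pdt\hat g_t=\phi_t^*\left(\Li_{\theta^\sharp}g-2\Ric+\tfrac12 H^2-\Li_{\theta^\sharp}g\right)_{2t}=\phi_t^*\left(-2\Ric+\tfrac12 H^2\right)_{2t},
\end{align*}
so the gauge terms cancel. Since $\Ric$ and $H^2$ are natural under pullback, with $\Ric_{\hat g_t}=\phi_t^*\Ric_{g_{2t}}$ and $\hat H_t^2=\phi_t^*(H_{2t}^2)$ (the latter because $H^2$ is a pointwise metric contraction of $g$ and $H$, exactly as verified earlier in the excerpt), this equals $-2\Ric_{\hat g_t}+\tfrac12\hat H_t^2$. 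Running the identical computation with $\alpha_t=H_{2t}$, and using that the Hodge Laplacian is natural ($\Delta_d^{\hat g_t}\hat H_t=\phi_t^*(\Delta_d^{g_{2t}}H_{2t})$, since $d$ commutes with pullback and $d^*$ transforms covariantly), yields $\pdt\hat H_t=-\Delta_d^{\hat g_t}\hat H_t$.

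For the second assertion I would carry out the same argument with $\alpha_t=f_{2t}$, obtaining
\begin{align*}
\pdt\hat f_t=\phi_t^*\left(\Li_{\theta^\sharp}f+2\Delta^C_g f+2h\right)_{2t}=\phi_t^*\left(\langle\theta^\sharp,\n f\rangle+2\Delta^C_g f+2h\right)_{2t}.
\end{align*}
The only nontrivial input is the Hermitian-geometric comparison of the Chern and Levi-Civita Laplacians on functions, $\Delta^{LC}_g f=2\Delta^C_g f+\langle\theta^\sharp,\n f\rangle$, which is exactly the combination produced above; note it reduces correctly to $\Delta^{LC}f=2\Delta^C f$ in the \kah{} case where $\theta=0$. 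After invoking this identity and the naturality $\Delta^{LC}_{\hat g_t}\hat f_t=\phi_t^*(\Delta^{LC}_{g_{2t}}f_{2t})$, the right-hand side collapses to $\Delta^{LC}_{\hat g_t}\hat f_t+2\hat h_t$, as claimed.

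The main obstacle is establishing the Laplacian comparison identity with the correct normalization and sign relative to the Lee form $\theta=d^*\omega\circ J$. I would verify it in local holomorphic coordinates, writing $\Delta^{LC}f$ as the $g^{i\bar j}$-trace of the full complex Hessian minus the Chern–Christoffel correction, and identifying the first-order remainder as the contraction of $df$ with the trace of the Chern torsion, which reproduces $\langle\theta^\sharp,\n f\rangle$; the factor of $2$ tracks the doubling between $\partial\db$ and the full real Hessian. Everything else reduces to the naturality of $\Ric$, $H^2$, $\Delta_d$, and $\Delta^{LC}$ under diffeomorphism, which is routine.
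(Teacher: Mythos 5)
Your proposal is correct and follows essentially the same route as the paper: the Lie-derivative/chain-rule identity with the factor of $2$ from the time reparametrization, cancellation of the gauge terms against the doubled $-\frac12\Li_{\theta^\sharp}$ terms, naturality of $\Ric$, $H^2$, $\Delta_d$, $\Delta^{LC}$ under pullback, and the comparison $\Delta^{LC}u=2\Delta^C u+\left<\theta^\sharp,\n u\right>$. The only cosmetic difference is that you propose to verify this last identity in local holomorphic coordinates, whereas the paper simply cites it from the proof of Proposition 4.1 in Streets--Tian's pluriclosed flow paper.
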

\begin{proof}By straight computation,
    \begin{align*}
        \pdt\hat{g}_t=&\phi_t^*\left(\pdt g_{2t}+\Li_{\theta_{2t}^\sharp}g_{2t}\right)=\phi_t^*\left(-2\Ric_{g_{2t}}+\frac12H^2_{2t}\right)=-2\Ric_{\hat{g}_t}+\frac{1}{2}\hat{H}_t^2,\\
        \pdt\hat{H}_t=&\phi_t^*\left(\pdt H_{2t}+\Li_{\theta_{2t}^\sharp}H_{2t}\right)=\phi_t^*\left(-\Delta_dH_{2t}\right)=-\Delta_d\hat{H}_t.
    \end{align*}
    \begin{align*}
        \pdt\hat{f}_t=&\phi_t^*\left(\pdt f_{2t}+\Li_{\theta_{2t}^\sharp}f_{2t}\right)=\phi_t^*\left(2\Delta^C_{2t}f_{2t}+2h_{2t}+\left<\theta_{2t}^\sharp,\n f_{2t}\right>\right)\\
        =&\phi_t^*\left(\Delta^{LC}_{2t}f_{2t}+2h_{2t}\right)=\Delta^{LC}_{\hat{g}_t}\hat{f}_t+2\hat{h}_t.
    \end{align*}
    The last line follows from the fact that $\Delta^{LC}u=2\Delta^C u+\left<\theta^\sharp,\n u\right>$ by the computation in \cite[Proof of Proposition 4.1]{MR2673720}.
\end{proof}

Next we introduce solitons of the pluriclosed flow.

\begin{definition}
    $(M^{2n},J,\omega,X)$ is called a \emph{pluriclosed soliton} if $X$ is the real part of a holomorphic vector field and there exists a constant $\lambda$ such that
    \begin{align*}
        \rho^{1,1}=\lambda\omega-\frac{1}{2}\Li_{X}\omega.
    \end{align*}
    A pluriclosed soliton is called gradient if $X=-\theta^\sharp+\nabla f$ for some $f\in C^\infty(M)$.
\end{definition}
\begin{remark}
    The Lee form term in the definition of gradient soliton is due to the gauge transformation between pluriclosed flow and generalized Ricci flow. In the \kah \ case, $\theta=0$, the definition coincides with that in \kah-Ricci flow. If we define the gradient soliton as $X=\nabla f$, then the steady or expanding gradient pluriclosed solitons must be \kah\ by Proposition \ref{nosteadyYesoliton}. However, it's unknown for the shrinking case.
\end{remark}
\begin{question}
    Does there exist non\kah\ shrinking pluriclosed soliton with $X=\nabla f$?
\end{question}

Before proving Proposition \ref{nosteadyYesoliton}, we give a lemma to show the relationship between pluriclosed and balanced condition.

\begin{lemma}\label{balancepluriclosed}
    Suppose $(M^{2n},J,\omega)$ be a compact pluriclosed manifold, if there exists $f\in C^\infty(M)$ such that $\hat{\omega}:=e^f\omega$ is balanced, i.e. $d^*_{\hat{\omega}}\hat{\omega}=0$, then $\omega$ and $\hat{\omega}$ are both \kah.
\end{lemma}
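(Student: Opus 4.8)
The plan is to turn both hypotheses into first-order identities on $\omega$, then use the pluriclosed condition twice: first to force $\hat\omega$ to be \kah, and afterwards to force the conformal factor to be constant. The balanced condition $d^*_{\hat\omega}\hat\omega=0$ is equivalent to $d(\hat\omega^{n-1})=0$, and since $\hat\omega^{n-1}=e^{(n-1)f}\omega^{n-1}$, splitting $d(e^{(n-1)f}\omega^{n-1})=0$ into its $(n,n-1)$ and $(n-1,n)$ parts gives
\begin{align*}
\partial f\wedge\omega^{n-1}+\omega^{n-2}\wedge\partial\omega=0,\qquad \bar\partial f\wedge\omega^{n-1}+\omega^{n-2}\wedge\bar\partial\omega=0.
\end{align*}
Equivalently, the $(2,1)$-form $\beta:=\partial\omega+\partial f\wedge\omega$ is primitive, i.e. $\omega^{n-2}\wedge\beta=0$. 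The pluriclosed hypothesis is simply $\partial\bar\partial\omega=0$.

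The first step is to prove $\hat\omega$ is \kah, assuming $n\geq3$ (for $n=2$ the balanced condition already reads $d\omega=0$). I would integrate the pluriclosed identity against the weight $e^{(n-2)f}\omega^{n-2}$: since $\partial\bar\partial\omega=0$,
\begin{align*}
0=\int_M i\,\partial\bar\partial\omega\wedge e^{(n-2)f}\omega^{n-2}.
\end{align*}
Integrating by parts (Stokes, noting $\bar\partial\partial\omega=-\partial\bar\partial\omega=0$) transfers the derivative onto the weight, producing $\partial f$-terms together with $\bar\partial\omega\wedge\partial\omega\wedge\omega^{n-3}$. I then substitute the two balanced identities and use primitivity of $\beta$ (so $\omega^{n-2}\wedge\beta=0$ annihilates the mixed terms). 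The pleasant point is that the $i\,\partial f\wedge\bar\partial f\wedge\omega^{n-1}$ contributions cancel exactly, leaving
\begin{align*}
\int_M i\,e^{(n-2)f}\,\bar\beta\wedge\beta\wedge\omega^{n-3}=0.
\end{align*}
By the Hodge--Riemann positivity for primitive $(2,1)$-forms the integrand is pointwise nonnegative, so $\beta\equiv0$. Then $\partial\omega=-\partial f\wedge\omega$, whence $d\omega=-df\wedge\omega$ and $d(e^f\omega)=0$; that is, $\hat\omega$ is \kah.

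The second step shows $f$ is constant. Writing $\omega=e^{-f}\hat\omega$ with $\hat\omega$ now closed, the pluriclosed condition collapses to $\partial\bar\partial(e^{-f})\wedge\hat\omega=0$. Setting $v=e^{-f}>0$, wedging with $v\,\hat\omega^{n-2}$, and integrating by parts using $d\hat\omega^{n-1}=0$ yields
\begin{align*}
\int_M i\,\partial v\wedge\bar\partial v\wedge\hat\omega^{n-1}=0,
\end{align*}
whose integrand is again nonnegative; hence $v$, and therefore $f$, is constant. Thus $\omega$ is a constant multiple of the \kah\ form $\hat\omega$, so both $\omega$ and $\hat\omega$ are \kah.

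The hard part will be the bidegree bookkeeping and, above all, the choice of weight $e^{(n-2)f}$ in the first step: it must be tuned precisely so that the gradient terms cancel and only the primitive piece $\bar\beta\wedge\beta\wedge\omega^{n-3}$ survives, after which one invokes the correct Hodge--Riemann sign to conclude $\beta\equiv0$. A secondary technical point is isolating the low-dimensional case $n=2$, where the weight carries a vanishing factor $(n-2)$ and balancedness already forces $\hat\omega$ to be \kah, so that only the second step is needed.
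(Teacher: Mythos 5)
Your proof is correct, but it takes a genuinely different route from the paper's. The paper's argument is scalar and pointwise: it quotes Gauduchon's identity $\left<dd^c\omega,\omega\wedge\omega\right>=2|\theta|^2-2|d\omega|^2+2d^*\theta$, uses the conformal transformation law of the Lee form $\theta_{e^fg}=\theta_g+(n-1)df$ to convert balancedness of $\hat{\omega}$ into exactness $\theta_g=du$, and then the pluriclosed condition turns the identity into $-\Delta u+|du|^2=|d\omega|^2\geq0$, so the strong maximum principle (applied to $e^{-u}$) yields $u$ constant and $d\omega=0$ simultaneously, uniformly in $n$. You instead argue at the level of forms: bidegree splitting of $d(\hat{\omega}^{n-1})=0$ makes $\beta=\partial\omega+\partial f\wedge\omega$ primitive, and your weighted integration by parts does work exactly as predicted --- carrying it out with a general weight $e^{\alpha f}$ gives
\begin{align*}
(n-2)\int_M e^{\alpha f}\,i\,\bar{\beta}\wedge\beta\wedge\omega^{n-3}+\bigl(\alpha-(n-2)\bigr)\int_M e^{\alpha f}\,i\,\partial f\wedge\bar{\partial}f\wedge\omega^{n-1}=0,
\end{align*}
so your choice $\alpha=n-2$ kills the gradient term, and the Hodge--Riemann positivity of primitive $(2,1)$-forms (pointwise linear algebra, valid for any Hermitian metric) forces $\beta\equiv0$; your second step for the conformal factor is then standard. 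What each approach buys: the paper's proof is shorter and needs no case distinction, but treats Gauduchon's identity as a black box; yours is self-contained and exposes the mechanism (the only obstruction is the primitive part of $\partial\omega$ in the conformal gauge), at the price of isolating $n=2$ and tuning the weight. One bonus hidden in the display above: taking any $\alpha>n-2$ makes both integrands pointwise nonnegative, so $\beta\equiv0$ and $df\equiv0$ follow at once and your two steps merge into one for $n\geq3$, while $n=2$ remains the trivial case where balanced already means K\"ahler.
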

\begin{proof}
   By \cite[(2.13)]{MR1836272}, there holds a pointwise equality:
    \begin{align*}
        \left<dd^c\omega,\omega\wedge\omega\right>=2|\theta|^2-2|d \omega|^2+2d^*\theta.
    \end{align*}
    $\hat{\omega}$ is balanced, which is equivalent to $\hat{\theta}=0$ since $\hat{d}^*\hat{\omega}=-\hat{\theta}\circ J=0$. By the conformal transformation of Lee form $\theta_{e^fg}=\theta_g+(n-1)df$, we have $\theta_g=du$ is exact, where $u=-(n-1)f$. Then we have
     \begin{align*}
        -\Delta u+|du|^2=|d \omega|^2\geqs0.
    \end{align*}
    Then $d\omega=0$ and $u$ is constant by strong maximum principle.
\end{proof}
\begin{remark}
    The Fino-Vezzoni conjecture \cite{MR3327047} asks whether a compact complex manifold that admits both pluriclosed metric and balanced metric must be \kah. This lemma shows that a pluriclosed metric and a balanced metric cannot lie in the same conformal class unless they are \kah.
\end{remark}

\begin{proposition}\label{nosteadyYesoliton}
    Suppose $(M^{2n},J,\omega,X)$ be a steady or expanding pluriclosed soliton, and there exists $f\in C^\infty(M,\rean)$ such that $X=\nabla f$. Then $H\equiv0$.
\end{proposition}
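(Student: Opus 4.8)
The plan is to reduce everything to Lemma~\ref{balancepluriclosed}. Since $H=d^c\omega$ and $|d^c\omega|=|d\omega|$ pointwise, we have $H\equiv 0$ if and only if $d\omega=0$, i.e. $\omega$ is \kah. Hence it suffices to produce a function $\phi$ with $e^{\phi}\omega$ balanced, for then the lemma forces $\omega$ (and $e^{\phi}\omega$) to be \kah\ and $H$ vanishes. Using the conformal transformation rule $\theta_{e^{\phi}g}=\theta_g+(n-1)d\phi$ already invoked in that lemma, $e^{\phi}\omega$ is balanced exactly when $\theta_g+(n-1)d\phi=0$; so the entire statement collapses to showing that the Lee form $\theta$ is \emph{exact}.

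To bring in the soliton equation I would first pass to the generalized-soliton picture via the cited Proposition (Streets--Tian, 6.3--6.4), which identifies the Bismut Ricci form, through $J$, with the symmetric tensor $\Ric-\frac14 H^2+\frac12\Li_{\theta^{\sharp}}g$. Because $X=\nabla f$ is the real part of a holomorphic vector field, $\Li_X J=0$ and $\Li_X\omega$ corresponds to $\Li_{\nabla f}g=2\nabla^2 f$, so the soliton equation $\rho^{1,1}=\lambda\omega-\frac12\Li_X\omega$ becomes
\[
\Ric-\frac14 H^2=\lambda g-\frac12\Li_{\nabla f+\theta^{\sharp}}g .
\]
This realizes $(g,H)$ as a generalized soliton with constant $2\lambda$ and vector field $\nabla f+\theta^{\sharp}$. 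In the expanding case $\lambda<0$ it is an \emph{expanding} generalized soliton, so the result recalled in the introduction (compact expanding generalized solitons are Ricci solitons) gives $H\equiv 0$ at once, finishing that case.

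The steady case $\lambda=0$ is where Lemma~\ref{balancepluriclosed} is genuinely needed. Here $(g,H)$ is a compact steady generalized soliton, hence gradient, so $\Ric-\frac14 H^2=-\nabla^2\psi$ for some $\psi$ (together with the matching $H$-equation). Comparing with the displayed equation yields $\Li_{\theta^{\sharp}-\nabla(\psi-f)}g=0$, that is $\theta^{\sharp}=\nabla(\psi-f)+Z$ with $Z$ a Killing field that moreover preserves $H$. The problem is thereby reduced to showing that $Z^{\flat}$ carries no de Rham cohomology, so that $\theta$ is exact; one then writes $\theta=d\zeta$, sets $\phi=-\zeta/(n-1)$, concludes $e^{\phi}\omega$ is balanced, and applies Lemma~\ref{balancepluriclosed}. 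To control the cohomology I would trace the soliton equation, getting $R-\frac14|H|^2=2n\lambda-\Delta f+d^*\theta$, integrate to obtain $\int_M\!\big(R-\frac14|H|^2\big)\,dV=2n\lambda\,\vol(M)\le 0$, and combine this with the pluriclosed identity $|d\omega|^2=|\theta|^2+d^*\theta$ from the proof of Lemma~\ref{balancepluriclosed} together with a Gauduchon-type scalar identity to pin down the Lee class $[\theta]\in H^1_{dR}(M)$.

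The hard part is precisely this final exactness step: establishing the correct curvature identity tying $\rho^{1,1}$ to the Lee form and eliminating the Killing ambiguity $Z$ in the steady case. The sign of $\lambda$ enters decisively through the integrated inequality $\int_M(R-\frac14|H|^2)\,dV=2n\lambda\,\vol(M)\le 0$: it is exactly what lets the integration/maximum-principle argument close for steady and expanding solitons, and it is the ingredient that breaks down for $\lambda>0$, which is consistent with the shrinking case being left open.
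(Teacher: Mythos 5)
Your overall reduction --- pass to a generalized soliton with vector field $\nabla f+\theta^{\sharp}$, prove the Lee form is exact, then invoke Lemma~\ref{balancepluriclosed} --- is exactly the paper's strategy, and your expanding case is complete, indeed more direct than the paper's: quoting that compact expanding generalized solitons satisfy $H=0$ settles it at once. The genuine gap is the steady case, and it is precisely the step you defer. From ``compact steady generalized solitons are gradient'' one obtains only \emph{some} potential $\psi$ with $\Ric-\frac14H^2+\nabla^2\psi=0$; subtracting the two soliton equations gives, as you say, $\theta^{\sharp}=\nabla(\psi-f)+Z$ with $Z$ Killing and $\Li_ZH=0$, and exactness of $\theta$ is equivalent to $Z=0$ (a Killing field whose dual $1$-form is exact is a parallel gradient, hence vanishes on a compact manifold). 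The tools you sketch cannot eliminate $Z$: for $\lambda=0$ your traced-and-integrated identity carries no information, and the pluriclosed identity $d^*\theta=|d\omega|^2-|\theta|^2$ constrains only $d^*\theta$, which is blind to $Z$ because Killing fields are divergence-free. So the proposal, as written, does not prove the statement.

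You should also know that the paper's own proof makes exactly the jump you refused to make: it reads the gradient-ness result of Streets--Garcia-Fernandez as saying that the \emph{given} field $\nabla f+\theta^{\sharp}$ is a gradient (``then $\theta=du$ is exact''), whereas the $\mathcal{F}$-functional argument only produces a gradient soliton structure on $(g,H)$, which may differ from the given vector field by your Killing field $Z$; already for Ricci solitons, a flat torus with a translation field shows that a steady soliton vector field need not be gradient. Worse, the ambiguity appears essential in this setting: the Boothby metric on the standard Hopf surface $S^3\times S^1$ is Bismut-flat, hence has $\rho^{1,1}=0$, so $(M,J,\omega,X=0)$ is a compact steady pluriclosed soliton with $X=\nabla(\mathrm{const})$ and $H=d^c\omega\neq0$; its Lee vector field is a nonzero Killing field preserving $H$ whose dual $1$-form is closed but not exact --- that is, a nonzero $Z$ realizing exactly the configuration neither you nor the paper rules out. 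So your instinct that this exactness step is ``the hard part'' is accurate, but it is not merely hard: without an additional hypothesis excluding such examples it cannot be carried out, and the same criticism applies to the paper's one-line treatment of it.
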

\begin{proof}
    Let $\phi_t$ be a family of biholomorphism generated by $(1-\lambda t)^{-1}X^{1,0}$, then $\omega_t=(1-\lambda t)\phi_t^*\omega_0$ solves the pluriclosed flow. Since $H_t:=d^c \omega_t=(1-\lambda t)\phi_t^*d^c\omega_0=(1-\lambda t)\phi_t^*H_0$, $(g_t,H_t)$ evolves only up to scaling and biholomorphism. Let $\psi_t$ be the family of diffeomorphism generated by $\theta^\sharp$, then $(\tilde{g}_t,\tilde{H}_t)=(\psi_t^*g_{2t},\psi_t^*H_{2t})$ solves the generalized Ricci flow. Since $(\tilde{g}_t,\tilde{H}_t)=(\Psi_t^*g_0,\Psi_t^*H_0)$ for $\Psi_t=\phi_{2t}\circ\psi_t$, $(g,H,\nabla f+\theta^\sharp)$ is a generalized steady soliton. By the $\mathcal{F}$-entropy and expanding entropy $\W_+$ \cite{MR4284898}, the compact generalized steady or expanding solitons are always gradient, then there exists $u\in C^\infty(M)$ such that $\nabla f+\theta^\sharp=\nabla (u+f)$, i.e. $\theta=du$ is exact. Then $\omega$ is both conformally balanced and pluriclosed, hence $\omega$ is \kah\ by Lemma \ref{balancepluriclosed}.
\end{proof}

For the gradient shrinking pluriclosed soliton of our definition, we can apply the Theorem \ref{noshrinkingsoliton}.
\begin{proposition}
    Suppose $(M^{2n},J,\omega,f)$ is a gradient shrinking pluriclosed soliton on a compact complex manifold, then $\omega$ is K\"{a}hler.
\end{proposition}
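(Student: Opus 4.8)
The plan is to reduce the statement to Theorem~\ref{noshrinkingsoliton} by transporting the pluriclosed soliton to a \emph{gradient} shrinking generalized soliton, along the same gauge-and-scale route used in Proposition~\ref{nosteadyYesoliton}. The one new observation is that, for the gradient definition $X=-\theta^\sharp+\nabla f$, the resulting generalized soliton vector field is an honest gradient, so I do not need the steady/expanding entropy input that was invoked in Proposition~\ref{nosteadyYesoliton}; instead Theorem~\ref{noshrinkingsoliton} applies directly.

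First I would build the self-similar solution of the pluriclosed flow. Writing the soliton equation as $\rho^{1,1}=\lambda\omega-\tfrac12\mathcal{L}_X\omega$ with $\lambda>0$, let $\phi_t$ be the biholomorphisms generated by $(1-\lambda t)^{-1}X^{1,0}$, so that $\omega_t=(1-\lambda t)\phi_t^*\omega_0$ solves $\partial_t\omega=-\rho^{1,1}$ and hence $H_t=d^c\omega_t=(1-\lambda t)\phi_t^*H_0$, using scale-invariance of $\rho^{1,1}$. Then, exactly as in the previous proposition, I reparametrize time by $t\mapsto 2t$ and gauge by the flow $\psi_t$ of $\theta^\sharp_{2t}$, obtaining $(\hat g_t,\hat H_t)=(\psi_t^*g_{2t},\psi_t^*H_{2t})$, which solves the generalized Ricci flow. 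Since both operations are scalings composed with diffeomorphisms, the solution has the self-similar form $(\hat g_t,\hat H_t)=(1-2\lambda t)\,\Psi_t^*(g_0,H_0)$ with $\Psi_t=\phi_{2t}\circ\psi_t$.

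Differentiating this scaled pull-back at $t=0$ identifies $(g,H)$ as a generalized soliton with constant $2\lambda$ and vector field $X+\theta^\sharp$. Concretely, equating the pluriclosed evolution $\partial_t g=-\Ric+\tfrac14H^2-\tfrac12\mathcal{L}_{\theta^\sharp}g$ with the self-similar value $-\lambda g+\tfrac12\mathcal{L}_X g$ gives $2\Ric-\tfrac12H^2=2\lambda g-\mathcal{L}_{X+\theta^\sharp}g$, while the companion equation for $H$ is produced by the same family $\Psi_t$. Using $X=-\theta^\sharp+\nabla f$, the vector field collapses to $X+\theta^\sharp=\nabla f$, so $(M^{2n},g,H,\nabla f)$ is a compact gradient generalized soliton, still shrinking since $2\lambda>0$. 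Theorem~\ref{noshrinkingsoliton} then forces $H=0$, i.e. $d^c\omega=0$; comparing bidegrees in $d^c\omega=i(\bar\partial-\partial)\omega$ yields $\partial\omega=\bar\partial\omega=0$, hence $d\omega=0$ and $\omega$ is K\"ahler.

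The main obstacle is purely bookkeeping: pinning down the normalizations of the generating vector fields so that the combined diffeomorphism $\Psi_t$ has generator $(1-2\lambda t)^{-1}\nabla f$, and verifying that the $H$-equation of the generalized soliton is genuinely produced by the self-similar family rather than having to be imposed by hand. Both are handled by differentiating $(1-2\lambda t)\Psi_t^*(g_0,H_0)$ and invoking the scale-invariance of $\rho^{1,1}$, exactly as in Proposition~\ref{nosteadyYesoliton}, so no essentially new difficulty arises.
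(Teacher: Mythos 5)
Your proposal is correct and takes essentially the same route as the paper: transport the self-similar pluriclosed solution by the Lee-form gauge (the argument of Proposition~\ref{nosteadyYesoliton}) to realize $(g,H,X+\theta^\sharp=\nabla f)$ as a compact gradient shrinking generalized soliton, then invoke Theorem~\ref{noshrinkingsoliton} to conclude $H=d^c\omega=0$. The paper's proof is exactly this two-step reduction, including your key observation that the gradient convention $X=-\theta^\sharp+\nabla f$ makes the transported vector field an honest gradient, so no entropy argument is required.
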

\begin{proof}
    By the argument in the proof of Proposition \ref{nosteadyYesoliton}, $(g,H,X+\theta^\sharp=\nabla f)$ is a gradient shrinking generalized soliton, where $H=d^c\omega$. Then by Theorem \ref{noshrinkingsoliton}, $H=0$.
\end{proof}
\begin{remark}
    Streets \cite{MR4023384} gives a proof in the complex surfaces case using classification theory of compact complex surfaces.
\end{remark}

Next under a cohomological assumption, we can show the pluriclosed soliton of any type is \kah ian.

\begin{theorem}\label{thm-generalizedsoliton}
    Suppose $(M^{2n},J,\omega,X)$ is a pluriclosed soliton on a compact complex manifold with $[\dd\omega]=0\in H^{2,1}_{\db}(M)$. Then $\omega$ is a gradient K\"{a}hler-Ricci soliton.
\end{theorem}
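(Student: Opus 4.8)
The plan is to reduce the statement to a question about the associated generalized soliton and then to exploit the hypothesis $[\dd\omega]=0$ to manufacture a torsion-bounding function, after which the vanishing of the torsion follows from an entropy argument. As in the proof of Proposition \ref{nosteadyYesoliton}, I would first pass through the gauge transformation: the self-similar solution $\omega_t=(1-\lambda t)\phi_t^*\omega$ of the pluriclosed flow, composed with the diffeomorphisms generated by $\theta^\sharp$, produces a genuine generalized Ricci flow $(g_t,H_t)$ with $H_t=\dd^c\omega_t$, and this flow is self-similar with generalized soliton vector field $X+\theta^\sharp$ and the same constant $\lambda$. Thus it suffices to show that this compact generalized soliton has $H\equiv0$; once that is established $\omega$ is \kah, $\rho^{1,1}$ is the ordinary Ricci form, and the soliton equation becomes the \kah-Ricci soliton equation.

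The heart of the argument is the construction of a torsion-bounding function from the cohomological hypothesis. Since $\omega$ is pluriclosed we have $\dd\db\omega=0$, so $\dd\omega$ is a $\db$-closed $(2,1)$-form; the assumption $[\dd\omega]=0\in H^{2,1}_{\db}(M)$ then yields a $(2,0)$-form $\beta$ with $\dd\omega=\db\beta$. Because the flow is self-similar, $\beta_t=(1-\lambda t)\phi_t^*\beta$ provides a potential for $\dd\omega_t$ along the whole flow, and $H_t=\dd^c\omega_t$ can be expressed through $\beta_t$ and $\bar\beta_t$ (in the Hermitian-symplectic case one even gets $H=\dif\gamma$ with $\gamma=i(\bar\beta-\beta)$ real, so the torsion is globally exact). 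I would then define $\psi$ as an appropriate nonnegative scalar built from $\beta$ and $\omega$ and verify, by a Bochner-type computation using the evolution equations of $g_t$, $H_t$ and $\beta_t$, the differential inequality $\square\psi\leqs-|H|^2$. This step, producing the correct combination of the potential and checking the subsolution inequality along the generalized Ricci flow, is where I expect the main difficulty to lie.

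With a torsion-bounding function in hand, I would conclude $H\equiv0$ by the mechanism of Proposition \ref{prop-soliton_cannot_torbounding}. Solving the divergence equation $\diver(\n u+u(X+\theta^\sharp))=0$ as in that proof produces a strictly positive solution $u_t$ of the conjugate heat equation, and the conjugation formula \eqref{eq-conjugation-relationship} gives $\ddt\int_M\psi_tu_t\,dV_t=\int_M\square\psi_t\cdot u_t\,dV_t\leqs-\int_M|H_t|^2u_t\,dV_t$. By self-similarity $\int_M|H_t|^2u_t\,dV_t$ is a fixed positive multiple of $(1-\lambda t)^{-1}$ (a positive constant when $\lambda=0$), whose time integral diverges to $+\infty$ as $t$ approaches the maximal existence time in every case; hence $\int_M\psi_tu_t\,dV_t\to-\infty$, contradicting $\psi\geqs0$ and $u_t>0$ unless $H\equiv0$. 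This is exactly Proposition \ref{prop-soliton_cannot_torbounding} when $\lambda=1$, and the steady and expanding cases run verbatim.

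It remains to promote the \kah\ soliton to a gradient one. Since $H\equiv0$ we have $\dd^c\omega=0$, hence $\dd\omega=\db\omega=0$ and $\omega$ is \kah, and the equation reads $\rho=\lambda\omega-\frac12\Li_X\omega$; as $\omega$ is closed, $\Li_X\omega=\dif\, i_X\omega$, so $[\rho]=\lambda[\omega]$ and $\rho-\lambda\omega$ is $\dd\db$-exact by the $\dd\db$-lemma on the compact \kah\ manifold $(M,J,\omega)$. Because $X$ is the real part of a holomorphic vector field, the standard Hodge-theoretic argument on compact \kah\ manifolds then shows that $i_X\omega$ has trivial harmonic part and that $X=\n f$ for a real potential $f$, so $\omega$ is a gradient \kah-Ricci soliton.
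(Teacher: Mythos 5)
Your overall skeleton (gauge to the associated generalized soliton, produce a nonnegative $\psi$ with $\square\psi\leqs-|H|^2$, then run the integral contradiction of Proposition \ref{prop-soliton_cannot_torbounding} in all three cases) is coherent, and your first and last paragraphs are fine. But the middle step, which you yourself flag as the main difficulty, is not merely difficult as you have set it up --- it is impossible, because your candidate $\psi$ is built from self-similar data. You take the \emph{pulled-back} potential $\beta_t=(1-\lambda t)\phi_t^*\beta$ together with $\omega_t$; any pointwise scalar manufactured from these is itself self-similar up to a nonnegative time-dependent factor (for instance $|\beta_t|^2_{g_t}=\phi_t^*|\beta_0|^2_{g_0}$ exactly, since the scale factors cancel in the norm of a $(2,0)$-form). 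For such a $\psi$ and the self-similar positive conjugate-heat solution $u_t$ of Proposition \ref{prop-soliton_cannot_torbounding}, the quantity $\int_M\psi_t u_t\,dV_t$ is constant (or at worst stays nonnegative and bounded), while the inequality you intend to verify, combined with \eqref{eq-conjugation-relationship}, would give
\begin{align*}
\ddt\int_M\psi_t u_t\,dV_t\ \leqs\ -\int_M|H_t|^2u_t\,dV_t\ =\ -\frac{a_0}{1-\lambda t},
\qquad a_0=\int_M|H_0|^2u_0\,dV_0,
\end{align*}
whose time integral diverges to $-\infty$ in the shrinking, steady and expanding cases alike. So a self-similar $\psi$ can satisfy $\square\psi\leqs-|H|^2$ only if $a_0=0$, i.e.\ only if $H\equiv0$: the Bochner computation you propose is exactly equivalent to the conclusion of the theorem, and the construction is circular. (This is the same phenomenon as Proposition \ref{prop-soliton_cannot_torbounding} itself: a nontrivial soliton admits \emph{no} torsion-bounding function, so in particular none built self-similarly from the soliton data.)

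The missing idea --- and it is the central mechanism of the paper's proof --- is to evolve the potential not by pullback but by the parabolic equation $\pdts\phi=-\rho^{2,0}+\lambda\phi$ coupled to the normalized flow. This evolution preserves $\dd\omega+\db\phi=0$ and yields the genuine subsolution estimate \eqref{evolvephi}, $(\pdts-\Delta)|\phi|^2\leqs-|T|^2$; crucially, the flow-evolved potential is in general \emph{not} self-similar, because potentials are unique only modulo the finite-dimensional space $\mathcal{H}^{2,0}_{\db}$ of $\db$-harmonic $(2,0)$-forms --- precisely the subtlety the paper points out before introducing its fix. The paper then closes the argument pointwise: it minimizes $h_t(\phi)=\max_M|\phi|^2_{g_t}$ over the affine space $\Psi_t$ of admissible potentials (properness gives a minimizer), and plays the monotonicity of $h_t$ under \eqref{evolvephi} against the invariance of $\min h_t(\Psi_t)$ under the soliton biholomorphisms to force $h_t$ constant, whence $T=0$ by the strong maximum principle. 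Had you used the flow-evolved potential, your own integral mechanism would actually close the argument without any minimization, since $\ddt\int_M|\phi_t|^2u_t\,dV_t\leqs-a_0/(1-\lambda t)$ integrates to $-\infty$ while the left-hand side is nonnegative; so your strategy is salvageable, and arguably cleaner than the paper's, but only after replacing the self-similar $\beta_t$ by the parabolically evolved potential.
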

\begin{proof}
   Since $[\dd\omega]=0$ there exists $\phi\in \mathcal{A}^{2,0}$ such that $\dd\omega+\db\phi=0$. We consider the normalized flow
    \begin{align*}
        \pdt\omega=-\rho^{1,1}+\lambda \omega,\\
        \pdt\phi=-\rho^{2,0}+\lambda \phi,
    \end{align*}
    where $\lambda\in\{-1,0,1\}$ depending the type of the soliton. Thus the soliton becomes the normalized steady soliton. Note that the condition $\dd\omega+\db\phi=0$ is preserved under the normalized flow by \cite{Ye2022PluriclosedFA}. By \cite[Proposition 3.23]{MR4629758}, we have $R^B_{ijk\bar{l}}=\nabla^C_kT_{ij\bar{l}}$ where $R^B$ is the Bismut curvature and $T=\dd\omega$, then
    \begin{align*}
        -\rho^{2,0}_{ij}=-g^{\bar{l}k}R^B_{ijk\bar{l}}=-g^{\bar{l}k}\nabla^C_kT_{ij\bar{l}}=g^{\bar{l}k}\nabla^C_k(\db\phi)_{ij\bar{l}}=g^{\bar{l}k}\nabla^C_k\nabla_{\bar{l}}\phi_{ij}=\Delta^C\phi_{ij}.
    \end{align*}
    By a normalized modification of \cite[Lemma 4.7]{MR3808262}, we have
    \begin{equation}\label{evolvephi}
        \begin{aligned}
            \left(\pdt-\Delta\right)|\phi|^2=&-|\nabla\phi|^2-|\overline{\nabla}\phi|^2-2\left<Q+\lambda\omega,\tr_g(\phi\otimes\bar{\phi})\right>+2\left<\phi,\lambda\phi\right>\\
        =&-|\nabla\phi|^2-|\overline{\nabla}\phi|^2-2\left<Q,\tr_g(\phi\otimes\bar{\phi})\right>\\
        \leqs&-|T|^2.
        \end{aligned}
    \end{equation}
    The last inequality follows from $\overline{\nabla}\phi=\db\phi=-\dd\omega=-T$ and $Q\geqs0$ is semidefinite.

    If $\phi$ evolves only up to diffeomorphism, then we have $T=0$ by the strong maximum principle. However, it's not clear whether it holds since $\phi$ satisfying the condition is not unique, which may differs up to a $\db$-closed $(2,0)$-form. We will choose a special $\phi$ to finish the proof.
    
    Define
    \begin{align*}
        \Psi_t:=\{\phi\in\mathcal{A}^{2,0}:\dd\omega_t+\db\phi=0\}=\hat{\phi}_t+\{\phi\in\mathcal{A}^{2,0}:\db\phi=0\}.
    \end{align*}
    Note that for $\phi\in\mathcal{A}^{2,0}$, $\db^*\phi=0$. So $\Psi_t=\hat{\phi}_t+\mathcal{H}^{2,0}_{\db}$ is a finite dimensional affine space. Let $f_t$ be the family of biholomorphism generated by the soliton vector field, $\omega_t=f_t^*\omega_0$. Then $f_t^*:\Psi_0\to\Psi_t$ is a isomorphism since $\dd\omega_t+\db f_t^*\phi_0=f_t^*(\dd\omega_0+\db\phi_0)$. Define
    \begin{align*}
        h_t:\Psi_t\to\rean:\quad \phi\mapsto \max_{x\in M}|\phi|^2_{g_t}(x)
    \end{align*}
    We claim there exists $\tilde{\phi}_t\in\Psi_t$ such that $h_t(\tilde{\phi}_t)=\inf_{\phi\in\Psi_t}h_t(\phi)$. Since $h_t$ is continuous, it suffices to show $h_t$ is proper, i.e. $\lim_{\alpha\to\infty}h_t(\hat{\phi}_t+\sum_{i=1}^m\alpha_i e_i)=+\infty$, where $\{e_i\}$ is the $L^2$-orthogonal basis of $\mathcal{H}^{2,0}_{\db}$. We compute the $L^2$ norm to get the properness.
    \begin{align*}
        h_t \geqs\fint_M\left|\hat{\phi}_t+\sum_{i=1}^m\alpha_i e_i\right|^2 dV=\fint_M \left(\sum_{i=1}^m|\alpha_i|^2 |e_i|^2+2\sum_{i=1}^m|\alpha_ie_i\hat{\phi}_t|+|\hat{\phi}_t|^2\right) dV\to+\infty 
    \end{align*}
    Now we use $\tilde{\phi}_0$ to be the initial data of the normalized flow. By (\ref{evolvephi}), $h_t(\phi_t)\leqs h_0(\phi_0)$. On the other hand, since $h_t(f_t^*\phi)=h_0(\phi)$, we have $h_0(\phi_0)=\min h_0(\Psi_0)=\min h_t(\Psi_t)\leqs h_t(\phi_t)$. So $h_t(\psi_t)$ is constant along the flow. By the strong maximum principle, $T=0$.

\end{proof}
\begin{remark}
    This generalizes Streets' theorem on K\"{a}hler manifolds \cite[Proposition 3.5]{MR4023384}. Note that for the manifolds which $\dd\db$-lemma holds, the assumption of our result always hold. By the pluriclosed condition, $\dd\omega$ is closed and $\dd$-exact, hence is $\dd\db$-exact, i.e. there exists $\alpha\in \mathcal{A}^{1,0}$ such that $\dd\omega=\dd\db\alpha=-\db\dd\alpha$. Then $[\dd\omega]=0\in H^{2,1}_{\db}$. See \cite{MR3032326} for non\kah\ manifolds that $\dd\db$-lemma holds.
\end{remark}

\section{Appendix}
To complete the proof of Lemma \ref{lem-point-monotonicity}, we need the next lemma.
\begin{lemma}[\cite{MR4284898}, Lemma 3.19]\label{lem-appendix}
    Given $(M^n,g)$ a Riemannian manifold and $H\in \Lambda^3 T^*$, $dH=0$, one has
    \begin{align*}
        (\diver H^2)_i=\frac16\n_i|H|^2-(d^*H)^{mn}H_{imn}.
    \end{align*}
\end{lemma}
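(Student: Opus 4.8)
The plan is to establish the identity by a direct computation in local coordinates, reducing everything to the total antisymmetry of $H$ together with the single structural input $dH=0$. Writing the symmetric tensor in components as $H^2_{ij}=H_{iab}H_j{}^{ab}$, I would first take the divergence and apply the Leibniz rule:
\[
(\diver H^2)_i=\n^j H^2_{ji}=(\n^j H_{jab})H_i{}^{ab}+H_{jab}\n^j H_i{}^{ab}.
\]
The first term is immediately the codifferential contribution: since $(d^*H)_{ab}=-\n^j H_{jab}$, it equals $-(d^*H)^{mn}H_{imn}$, which is exactly the second term on the right-hand side of the claimed identity. It therefore remains to show that the surviving term, $S_i:=H^{jab}\n_j H_{iab}$, equals $\tfrac16\n_i|H|^2$.

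For this I would invoke $dH=0$, written componentwise as the vanishing of the alternating sum $(dH)_{jiab}=0$, i.e.
\[
\n_j H_{iab}=\n_i H_{jab}-\n_a H_{jib}+\n_b H_{jia}.
\]
Contracting this against $H^{jab}$ splits $S_i$ into three pieces. The first piece is $H^{jab}\n_i H_{jab}=\tfrac12\n_i(H_{jab}H^{jab})=\tfrac12\n_i|H|^2$. The key observation is that the remaining two pieces can each be folded back into $S_i$ itself by relabeling the dummy indices $a,b$ and using the antisymmetry of $H$ in its first two slots (which lets one trade a derivative index for a form index at the cost of a sign): after these manipulations the $\n_b$-piece and the $\n_a$-piece both reduce to $-S_i$, contributing $-2S_i$ in total. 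This yields the linear relation $S_i=\tfrac12\n_i|H|^2-2S_i$, hence $3S_i=\tfrac12\n_i|H|^2$, so $S_i=\tfrac16\n_i|H|^2$, which completes the identity.

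The computation is entirely elementary, so the main obstacle is purely combinatorial bookkeeping: one must track the signs produced by permuting the antisymmetric indices of $H$ and verify that the two cross terms genuinely collapse onto $S_i$, rather than producing a new, independent contraction. This hinges precisely on the antisymmetry $H^{jab}=-H^{ajb}$, which is what makes the three-term closedness relation close up into a single scalar equation for $S_i$; without the hypothesis $dH=0$ the term $S_i$ would not simplify at all. I would also fix at the outset the normalization $|H|^2=H_{abc}H^{abc}$ (full contraction, with no $1/p!$ factor), since the coefficient $\tfrac16$ in the statement depends on this convention.
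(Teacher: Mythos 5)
Your computation is correct and, since the paper itself does not prove this lemma but simply cites \cite{MR4284898}, the natural comparison is with that source, whose argument is the same: split $\nabla^j(H_{jab}H_i{}^{ab})$ by Leibniz, identify $-\nabla^j H_{jab}=(d^*H)_{ab}$, and contract the four-term identity $(dH)_{jiab}=0$ against $H^{jab}$ so that the two cross terms fold back into $S_i$ by relabeling and antisymmetry, giving $3S_i=\tfrac12\nabla_i|H|^2$. Your sign bookkeeping checks out (the relabelings $j\leftrightarrow a$ and $j\leftrightarrow b$ each produce $-S_i$), and your stated normalization $|H|^2=H_{abc}H^{abc}$ is indeed the one the paper uses (note $\tr_g H^2=|H|^2$ there), so the coefficient $\tfrac16$ is right.
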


Now we list the proof of Lemma \ref{lem-point-monotonicity}.
\begin{proof}[Proof of Lemma \ref{lem-point-monotonicity}]
By definition,
\begin{align*}
    \square^*u=&(-\pdt-\Delta+R-\frac14|H|^2)(4\pi\tau)^{-\frac{n}{2}}e
^{-f}\\
=&\left(-\frac{n}{2\tau}+\pdt f+\Delta f-|\n f|^2+R-\frac14|H|^2\right)u,
\end{align*}
so $u_t$ being a solution to the conjugate heat equation is equivalent to
\begin{align*}
    \pdt f=-\Delta f+|\nabla f|^2-R+\frac14|H|^2+\frac{n}{2\tau}.
\end{align*}
By direct computation, we have
 \begin{align*}
  \left( \pdt + \Delta \right) 2 \Delta f =&\ 2 \left[ \left<2 \Ric - \frac{1}{2} H^2,
\nabla^2 f \right> + \Delta \left( - \Delta f + \brs{\nabla f}^2 - R + \frac{1}{4} \brs{H}^2
\right) \right. \\
&\ \left. \qquad - \left< \frac{1}{2} \diver H^2 - \frac{1}{4} \nabla \brs{H}^2, \nabla f
\right> + \Delta \Delta f \right]\\
=&\ \left< 4 \Ric - H^2, \nabla^2 f \right> + 2 \Delta \brs{\nabla f}^2 - 2 \Delta R +
\frac{1}{2} \Delta \brs{H}^2 \\
&\ \ \qquad + \left< \frac{1}{2} \nabla \brs{H}^2 - \diver H^2, \nabla f
\right>.
 \end{align*}
and
\begin{align*}
 \left( \pdt + \Delta \right) \left(- \brs{\nabla f}^2 \right) =&\ - \left[ 2 \left< \nabla
(- \Delta f + \brs{\nabla f}^2 - R + \frac{1}{4} \brs{H}^2), \nabla f \right> \right.\\
&\ \left. \qquad + \left< 2
\Ric - \frac{1}{2} H^2, \nabla f \otimes \nabla f \right> + \Delta \brs{\nabla f}^2 \right]\\
=&\ 2 \left< \nabla\left(\Delta f - \brs{\nabla f}^2 + R - \frac{1}{4} \brs{H}^2 \right),
\nabla f \right>\\
&\ \qquad + \left< \frac{1}{2} H^2 - 2 \Ric, \nabla f \otimes \nabla f \right> - \Delta
\brs{\nabla f}^2.
\end{align*}
Notice that
\begin{align*}
 \left( \pdt + \Delta \right) R =&\ 2 \Delta R + 2 \brs{\Ric}^2 - \frac{1}{2} \Delta
\brs{H}^2 + \frac{1}{2} \diver \diver H^2 - \frac{1}{2} \left< \Ric, H^2 \right>.
\end{align*}
and
\begin{align*}
 \left( \pdt + \Delta \right) \left( - \frac{1}{12} \brs{H}^2 \right) =&\ -
\frac{1}{12} \left[ \left<6 \Ric - \frac{3}{2} H^2, H^2 \right> + 2 \left< \Delta_d
H, H \right> + \Delta \brs{H}^2 \right]\\
=&\ \left< \frac{1}{8} H^2 - \frac{1}{2} \Ric, H^2 \right> - \frac{1}{6} \left<
\Delta_d H, H \right> - \frac{1}{12} \Delta \brs{H}^2.
\end{align*}
Let $V = 2\Delta f - |\nabla f|^2 + R - \frac{1}{12} |H|^2$.
We obtain
\begin{align*}
 \left( \pdt + \Delta \right) V =&\ \Delta \brs{\nabla f}^2 - \frac{1}{12} \Delta \brs{H}^2 -
\frac{1}{6} \left< \Delta_d H, H \right> + \frac{1}{2} \diver \diver H^2\\
&\ + \left< \frac{1}{2} \nabla \brs{H}^2 - \diver H^2, \nabla f \right> + \left<
\frac{1}{2} H^2 - 2 \Ric, \nabla f \otimes \nabla f \right>\\
&\ + 2\brs{\Ric - \frac{1}{4} H^2 + \nabla^2 f}^2 - 2\brs{\nabla^2 f}^2\\
&\ + 2 \left< \nabla\left( \Delta f - \brs{\nabla f}^2 + R - \frac{1}{4} \brs{H}^2 \right),
\nabla f \right>.
\end{align*}
Using Lemma \ref{lem-appendix} one has
\begin{align*}
 \frac{1}{2} \diver \diver H^2 - \frac{1}{6} \left< \Delta_d H, H \right> -
\frac{1}{12} \Delta \brs{H}^2 = \frac{1}{2} \brs{d^* H}^2,
\end{align*}
and then
\begin{align*}
 \left< \frac{1}{2} \nabla \brs{H}^2 - \diver H^2, \nabla f \right> = \frac{1}{3} \left<
\nabla \brs{H}^2, \nabla f \right> + \left< d^* H, i_{\nabla f} H \right>.
\end{align*}
Also one has
\begin{align*}
 \Delta \brs{\nabla f}^2 - 2 \brs{\nabla^2 f}^2 - 2 \left< \Ric, \nabla f \otimes \nabla f \right> =
2 \left<\nabla \Delta f, \nabla f \right>.
\end{align*}
Therefore
\begin{align*}
 \left( \pdt + \Delta \right) V =&\ 2 \brs{\Ric - \frac{1}{4} H^2 + \nabla^2 f}^2 +
\frac{1}{2} \brs{d^* H + i_{\nabla f}  H}^2 + 2 \left< \nabla V, \nabla f \right>.
\end{align*}
 Let $W=(T-t)V+f-n$. We have
 \begin{align*}
      \left( \pdt + \Delta \right)W=&-V+2\tau \brs{\Ric - \frac{1}{4} H^2 + \nabla^2 f}^2 +
\frac{\tau}{2} \brs{d^* H + i_{\nabla f}  H}^2\\
&+ 2\tau \left< \nabla V, \nabla f \right>+|\nabla f|^2-R+\frac14|H|^2+\frac{n}{2\tau}\\
=&-2
\Delta f  - 2R + \frac{1}{3} |H|^2+\frac{n}{2\tau}+ 2 \left< \nabla W, \nabla f \right>\\
&+2\tau \brs{\Ric - \frac{1}{4} H^2 + \nabla^2 f}^2 +
\frac{\tau}{2} \brs{d^* H + i_{\nabla f}  H}^2\\
=&2\tau \brs{\Ric - \frac{1}{4} H^2 + \nabla^2 f-\frac{g}{2\tau}}^2 +
\frac{\tau}{2} \brs{d^* H + i_{\nabla f}  H}^2- \frac{1}{6} |H|^2+ 2 \left< \nabla W, \nabla f \right>.
 \end{align*}
Finally,
\begin{align*}
\left(\pdt + \Delta \right) v =&\ \left( \pdt + \Delta \right) \left(W u \right)\\
=&\ \left(\left( \pdt + \Delta \right) W \right) u + W \left(\pdt + \Delta \right) u + 2
\left<\nabla W, \nabla u \right>\\
=&\ \left( 2 \tau \brs{\Ric - \frac{1}{4} H^2 + \nabla^2 f - \frac{g}{2 \tau}}^2 +
\frac{\tau}{2} \brs{d^* H + i_{\nabla f} H}^2 \right.\\
&\ \left. - \frac{1}{6} \brs{H}^2 + 2 \left< \nabla W, \nabla f \right> \right) u +
W \left( R u - \frac{1}{4} \brs{H}^2 u \right) - 2 \left< \nabla W, \nabla f
\right> u\\
=&\ 2 \tau \brs{\Ric - \frac{1}{4} H^2 + \nabla^2 f - \frac{g}{2 \tau}}^2 u + \frac{\tau}{2}
\brs{d^* H + i_{\nabla f} H}^2 u - \frac{1}{6} \brs{H}^2 u\\
&\ + R v - \frac{1}{4} \brs{H}^2 v.
\end{align*}
Hence
\begin{align*}
    \square^*v=-\left(2 \tau \brs{\Ric - \frac{1}{4} H^2 + \nabla^2 f - \frac{g}{2 \tau}}^2 + \frac{\tau}{2}
\brs{d^* H + i_{\nabla f} H}^2  - \frac{1}{6} \brs{H}^2 \right)u.
\end{align*}
\end{proof}

\begin{lemma}[Gauduchon\cite{MR0742896}]\label{lem-divergencepde}
Given a smooth vector field $X$ on a compact Riemannian manifold $(M^n,g)$.
Then kernel space of the linear operator $L$ defined by
\begin{align*}
Lv=\Delta v+\diver(vX)
\end{align*}
is $\ker(L)=\{\lambda u|\lambda\in\mathbb{R}\}$, in which $u$ is strictly positive.
\end{lemma}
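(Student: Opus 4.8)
The plan is to prove the statement by passing to the $L^2$-adjoint $L^*$, whose kernel is transparent, and then transferring the conclusions back to $L$ via Fredholm theory together with the Krein--Rutman theorem. First I would compute the formal adjoint: integrating by parts on the closed manifold $M$ gives, for all $v,w\in C^\infty(M)$,
\begin{align*}
\int_M (Lv)\,w\,dV=\int_M v\left(\Delta w-\langle X,\nabla w\rangle\right)dV,
\end{align*}
so that $L^*w=\Delta w-\langle X,\nabla w\rangle$. The crucial structural feature is that $L^*$ carries \emph{no} zeroth-order term; in particular $L^*1=0$, so the constants lie in $\ker L^*$.

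Next I would determine $\ker L^*$ completely. If $L^*w=0$, then $w$ is simultaneously a sub- and a supersolution of the elliptic operator $\Delta-\langle X,\nabla\cdot\rangle$, which has no zeroth-order part. Since $M$ is compact and connected, $w$ attains its maximum, and Hopf's strong maximum principle forces $w$ to be constant; hence $\ker L^*=\{\text{constants}\}$ is one-dimensional. Because $L=\Delta+(\text{first order})+(\diver X)$ differs from the (self-adjoint, index-zero) Laplacian by a relatively compact perturbation, $L$ is Fredholm of index zero, so $\dim\ker L=\dim\ker L^*=1$.

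It remains to show that the generator of $\ker L$ may be taken strictly positive; this is the main obstacle, since $L$ itself has the sign-indefinite zeroth-order coefficient $\diver X$ and hence obeys no naive maximum principle. I would invoke Krein--Rutman. Fixing $\sigma>\max_M\diver X$, the operator $\sigma-L=-\Delta-\langle X,\nabla\cdot\rangle+(\sigma-\diver X)$ has strictly positive zeroth-order coefficient, hence is invertible with inverse $T:=(\sigma-L)^{-1}$; by elliptic regularity $T$ is compact on $C^0(M)$, and by the strong maximum principle it is strongly positive (it sends nonnegative, nonzero functions to strictly positive ones). Krein--Rutman then yields a simple principal eigenvalue equal to the spectral radius $r(T)>0$, realized by a strictly positive eigenfunction $u>0$, which is the unique positive eigenfunction of $T$ up to scale.

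Finally I would identify this principal eigenvalue with $0$ by applying the same reasoning to $T^*=(\sigma-L^*)^{-1}$. The positive function $1$ satisfies $L^*1=0$, hence $T^*1=\sigma^{-1}1$; by the uniqueness of the positive eigenfunction of a strongly positive compact operator, $\sigma^{-1}$ must equal the spectral radius $r(T^*)$. Since $T$ and $T^*$ share the same spectral radius, $r(T)=\sigma^{-1}$, whence the principal eigenvalue of $L$ equals $\sigma-r(T)^{-1}=0$. Therefore the Krein--Rutman eigenfunction satisfies $Lu=0$ with $u>0$, and since $\dim\ker L=1$ we conclude $\ker L=\{\lambda u:\lambda\in\mathbb{R}\}$ with $u$ strictly positive, as claimed.
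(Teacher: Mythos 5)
Your proposal is correct, and its first half (computing $L^*w=\Delta w-\langle X,\nabla w\rangle$, identifying $\ker L^*$ with the constants via the strong maximum principle, and concluding $\dim\ker L=1$ by Fredholm index zero) coincides with the paper's argument. Where you genuinely diverge is in producing the \emph{positive} generator of $\ker L$. The paper stays elementary: it uses the Fredholm decomposition $W^{1,2}=\ker L\oplus\image L^*$ together with $\ker L=(\image L^*)^{\perp}$, observes that a strictly positive function can never lie in $\image L^*$ (at a maximum of $w$ one has $L^*w\leqslant 0$), and derives a contradiction from $\int_M u\phi\,dV=0$ if $u$ changed sign; then the strong maximum principle upgrades $u\geqslant 0$ to $u>0$. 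You instead invoke Krein--Rutman on the resolvent $T=(\sigma-L)^{-1}$: this is heavier machinery, but it is conceptually clean, identifying $0$ as the principal eigenvalue of $L$ (because the adjoint problem has the positive eigenfunction $1$) and delivering positivity and simplicity of the eigenfunction in one stroke, so that even the Fredholm step becomes redundant. One point you should make precise: the Banach-space adjoint of $T$ on $C^0(M)$ acts on measures, not functions, so ``$T^*$'' should be interpreted either as $(\sigma-L^*)^{-1}$ acting on $C^0(M)$ (checking separately, exactly as for $T$, that it is compact and strongly positive), or as the Hilbert-space adjoint in $L^2$; in either case the equality of the spectral radii of $T$ and of $(\sigma-L^*)^{-1}$ follows because all nonzero spectral points of these compact operators are eigenvalues with smooth eigenfunctions by elliptic regularity, hence agree with the $L^2$ eigenvalues, where adjointness gives the equality. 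With that clarification, your argument is complete; the trade-off is generality and conceptual transparency (yours) versus self-containedness and economy of tools (the paper's).
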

\begin{proof}
    Consider the $L^2$ adjoint $L^*$ of the operator $L$, which has no zero order term:
    \begin{align*}
        L^*v=\Delta v-\left<X,\n v\right>.
    \end{align*}
    Then by the strong maximum principle, $L^*v=0$ if and only if $v$ is constant, i.e. $\dim \ker(L^*)=1$.

    Next we apply the standard argument in elliptic theory. Take $\sigma>0$ sufficiently large such that $L^*_{\sigma}:=L^*-\sigma I:W^{1,2}\to\left(W^{1,2}\right)^*$ is invertible. Then $(L^*_\sigma)^{-1}:W^{1,2}\to (W^{1,2})^*\to W^{1,2}$ is a compact operator. So $L^*=L^*_\sigma+\sigma I=L^*_\sigma\circ (I+\sigma (L^*_\sigma)^{-1}\circ I)$ is a composition of a Fredholm operator with a invertible operator.

    By the Fredholm theory, we have $\dim\ker (L)=\dim\ker (L^*)=1$ and $\ker (L)=\left(\image L^*\right)^\perp$.
    Then $\ker(L)=\{\lambda u|\lambda\in\mathbb{R}\}$, where $u\neq0$. If there exists $x_1,x_2\in M$ such that $u(x_1)<0$, $u(x_2)>0$, then we can take a function $\phi>0$ such that $\int_M u\phi dV=0$. By the strong maximum principle, $\phi\notin \image L^*$. Since $W^{1,2}(M)=\ker(L)\oplus \image L^*$, we have a decomposition $\phi=\lambda_0u+f$, where $\lambda_0\neq0$ and $f\in \image L^*$. Then
    \begin{align*}
        0=\int_M u\phi dV=\int_M u(\lambda_0u+f) dV=\lambda_0\int_M u^2 dV\neq 0.
    \end{align*}
    It's a contradiction, so we can assume $u\geqs0$. By the strong maximum principle, we have $u>0$.
\end{proof}

\bibliographystyle{ytamsalpha}
\bibliography{ref.bib}	

\providecommand{\bysame}{\leavevmode\hbox to3em{\hrulefill}\thinspace}
\providecommand{\MR}{\relax\ifhmode\unskip\space\fi MR }
% \MRhref is called by the amsart/book/proc definition of \MR.
\providecommand{\MRhref}[2]{%
  \href{http://www.ams.org/mathscinet-getitem?mr=#1}{#2}
}
\providecommand{\href}[2]{#2}
\providecommand{\doihref}[2]{\href{#1}{#2}}
\providecommand{\arxivfont}{\tt}
\begin{thebibliography}{CFMP85}

\bibitem[AI01]{MR1836272}
B.~Alexandrov and S.~Ivanov, \emph{Vanishing theorems on {H}ermitian
  manifolds}, \href{https://doi.org/10.1016/S0926-2245(01)00044-4}{Differential
  Geom. Appl. \textbf{14} (2001) 251--265}.

\bibitem[ASU23]{MR4619595}
V.~Apostolov, J.~Streets, and Y.~Ustinovskiy, \emph{Variational structure and
  uniqueness of generalized {K}\"{a}hler-{R}icci solitons},
  \href{https://doi.org/10.1007/s42543-022-00049-x}{Peking Math. J. \textbf{6}
  (2023) 307--351}.

\bibitem[AT13]{MR3032326}
D.~Angella and A.~Tomassini, \emph{On the {$\partial\overline{\partial}$}-lemma
  and {B}ott-{C}hern cohomology},
  \href{https://doi.org/10.1007/s00222-012-0406-3}{Invent. Math. \textbf{192}
  (2013) 71--81}.

\bibitem[Bry]{Bryant}
R.~Bryant, \emph{Ricci flow solitons in dimension three with
  {SO}(3)-symmetries}.
  \url{https://services.math.duke.edu/~bryant/3DRotSymRicciSolitons.pdf}.

\bibitem[CFMP85]{MR0819433}
C.~G. Callan, D.~Friedan, E.~J. Martinec, and M.~J. Perry, \emph{Strings in
  background fields},
  \href{https://doi.org/10.1016/0550-3213(85)90506-1}{Nuclear Phys. B
  \textbf{262} (1985) 593--609}.

\bibitem[DP23]{MR4598934}
S.~Dinew and D.~Popovici, \emph{A variational approach to {SKT} and balanced
  metrics}, \href{https://doi.org/10.1016/j.matpur.2023.05.008}{J. Math. Pures
  Appl. (9) \textbf{175} (2023) 237--268}.

\bibitem[FV15]{MR3327047}
A.~Fino and L.~Vezzoni, \emph{Special {H}ermitian metrics on compact
  solvmanifolds}, \href{https://doi.org/10.1016/j.geomphys.2014.12.010}{J.
  Geom. Phys. \textbf{91} (2015) 40--53}.

\bibitem[Gau84]{MR0742896}
P.~Gauduchon, \emph{La {$1$}-forme de torsion d'une vari\'{e}t\'{e} hermitienne
  compacte}, \href{https://doi.org/10.1007/BF01455968}{Math. Ann. \textbf{267}
  (1984) 495--518}.

\bibitem[GFJS23]{MR4629758}
M.~Garcia-Fernandez, J.~Jordan, and J.~Streets, \emph{Non-{K}\"{a}hler
  {C}alabi-{Y}au geometry and pluriclosed flow},
  \href{https://doi.org/10.1016/j.matpur.2023.07.002}{J. Math. Pures Appl. (9)
  \textbf{177} (2023) 329--367}.

\bibitem[GFS21]{MR4284898}
M.~Garcia-Fernandez and J.~Streets,
  \doihref{http://dx.doi.org/10.1090/ulect/076}{\emph{Generalized {R}icci
  flow}}, University Lecture Series, vol.~76, American Mathematical Society,
  Providence, RI, [2021] \copyright 2021.
  \url{https://doi.org/10.1090/ulect/076}.

\bibitem[Lot03]{MR2016700}
J.~Lott, \emph{Some geometric properties of the {B}akry-\'{E}mery-{R}icci
  tensor}, \href{https://doi.org/10.1007/s00014-003-0775-8}{Comment. Math.
  Helv. \textbf{78} (2003) 865--883}.

\bibitem[OSW06]{MR2214659}
T.~Oliynyk, V.~Suneeta, and E.~Woolgar, \emph{A gradient flow for worldsheet
  nonlinear sigma models},
  \href{https://doi.org/10.1016/j.nuclphysb.2006.01.036}{Nuclear Phys. B
  \textbf{739} (2006) 441--458}.

\bibitem[Per02]{math/0211159}
G.~Perelman, \emph{The entropy formula for the {R}icci flow and its geometric
  applications}, 2002. \href{http://arxiv.org/abs/math/0211159}{{\arxivfont
  arXiv:math/0211159}}.

\bibitem[Pet16]{MR3469435}
P.~Petersen,
  \doihref{http://dx.doi.org/10.1007/978-3-319-26654-1}{\emph{Riemannian
  geometry}}, third ed., Graduate Texts in Mathematics, vol. 171, Springer,
  Cham, 2016. \url{https://doi.org/10.1007/978-3-319-26654-1}.

\bibitem[PRar]{2401.05028}
F.~Podestà and A.~Raffero, \emph{Three-dimensional positively curved
  generalized {R}icci solitons with {SO}(3)-symmetries}, 2024.
  \href{http://arxiv.org/abs/arXiv:2401.05028}{{\arxivfont arXiv:2401.05028}}.

\bibitem[ST10]{MR2673720}
J.~Streets and G.~Tian, \emph{A parabolic flow of pluriclosed metrics},
  \href{https://doi.org/10.1093/imrn/rnp237}{Int. Math. Res. Not. IMRN (2010)
  3101--3133}.

\bibitem[ST13]{MR3110582}
\bysame, \emph{Regularity results for pluriclosed flow},
  \href{https://doi.org/10.2140/gt.2013.17.2389}{Geom. Topol. \textbf{17}
  (2013) 2389--2429}.

\bibitem[Str08]{MR2426247}
J.~Streets, \emph{Regularity and expanding entropy for connection {R}icci
  flow}, \href{https://doi.org/10.1016/j.geomphys.2008.02.010}{J. Geom. Phys.
  \textbf{58} (2008) 900--912}.

\bibitem[Str18]{MR3808262}
\bysame, \emph{Pluriclosed flow on generalized {K}\"{a}hler manifolds with
  split tangent bundle}, \href{https://doi.org/10.1515/crelle-2015-0055}{J.
  Reine Angew. Math. \textbf{739} (2018) 241--276}.

\bibitem[Str19]{MR4023384}
\bysame, \emph{Classification of solitons for pluriclosed flow on complex
  surfaces}, \href{https://doi.org/10.1007/s00208-019-01887-4}{Math. Ann.
  \textbf{375} (2019) 1555--1595}.

\bibitem[Str22]{MR4348696}
\bysame, \emph{Ricci-{Y}ang-{M}ills flow on surfaces and pluriclosed flow on
  elliptic fibrations}, \href{https://doi.org/10.1016/j.aim.2021.108127}{Adv.
  Math. \textbf{394} (2022) Paper No. 108127, 31}.

\bibitem[Str20]{MR4181011}
\bysame, \href{https://doi.org/10.1007/978-3-030-34953-0_19}{\emph{Pluriclosed
  flow and the geometrization of complex surfaces}}, Geometric analysis---in
  honor of {G}ang {T}ian's 60th birthday, Progr. Math., vol. 333,
  Birkh\"{a}user/Springer, Cham, [2020] \copyright 2020, pp.~471--510.

\bibitem[SU21]{cpam}
J.~Streets and Y.~Ustinovskiy, \emph{Classification of generalized
  {K}\"{a}hler-{R}icci solitons on complex surfaces},
  \href{https://doi.org/10.1002/cpa.21947}{Comm. Pure Appl. Math. \textbf{74}
  (2021) 1896--1914}.

\bibitem[Ye24]{Ye2022PluriclosedFA}
Y.~Ye, \emph{Pluriclosed flow and {H}ermitian-symplectic structures},
  \href{https://doi.org/10.1007/s12220-024-01550-8}{J. Geom. Anal. (2024) }.

\end{thebibliography}
	
\end{document}